\numberwithin{equation}{section}
\newtheorem{Theorem}{Theorem}
\newtheorem{Definition}[Theorem]{Definition}
\newtheorem{Lemma}{Lemma}[section]
\newtheorem{Proposition}[Theorem]{Proposition}
\newtheorem{Remark}{Remark}[section]
\DeclareMathOperator*{\esssup}{ess\,sup}
\begin{document}
%[[[ Title
\title
[LWP and Blow-up for half GLK eq in a rough metric]
{Local Well-posedness and Blow-up for the Half Ginzburg-Landau-Kuramoto equation
with rough coefficients and potential}
%]]]

%[[[ Authors
%[[[ Luigi
\author[L. Forcella]{Luigi Forcella}

\address{%
B\^{a}timent des Math\'ematiques,
\'Ecole Polytechnique F\'ed\'eral de Lausanne,
Station 8, CH-1015 Lausanne, Switzerland
}

\email{luigi.forcella@epfl.ch}
%\urladdr{https://en.sns.it/ugov/persone/luigi-forcella}
%]]]

%[[[ K. Fujiwara
\author[K. Fujiwara]{Kazumasa Fujiwara}

\address{%
Centro di Ricerca Matematica Ennio De Giorgi,
Scuola Normale Superiore,
Piazza dei Cavalieri, 3, 56126 Pisa,
Italy
}

\email{kazumasa.fujiwara@sns.it}
%\urladdr{http://kfujiwara.webcrow.jp/}

\thanks{
K. Fujiwara was partly supported
by Top Global University Project of Waseda University.
}
%]]]

%[[[ V. Georgiev
\author[V. Georgiev]{Vladimir Georgiev}

\address{%
Department of Mathematics,
University of Pisa,
Largo Bruno Pontecorvo 5
I - 56127 Pisa, Italy,
Faculty of Science and Engineering, Waseda University,
3-4-1, Okubo, Shinjuku-ku, Tokyo 169-8555,
Japan,
and IMI--BAS, Acad.
Georgi Bonchev Str., Block 8, 1113 Sofia, Bulgaria
}

\email{georgiev@dm.unipi.it}
%\urladdr{http://people.dm.unipi.it/georgiev/}

\thanks{V. Georgiev was supported in part by  INDAM,
GNAMPA - Gruppo Nazionale per l'Analisi Matematica,
la Probabilit\`a e le loro Applicazioni,
by Institute of Mathematics and Informatics,
Bulgarian Academy of Sciences and Top Global University Project, Waseda University.}
%]]]

%[[[ T. Ozawa
\author[T. Ozawa]{Tohru Ozawa}

\address{%
Department of Applied Physics, Waseda University,
3-4-1, Okubo, Shinjuku-ku, Tokyo 169-8555, Japan}

\email{txozawa@waseda.jp}
%\urladdr{http://www.ozawa.phys.waseda.ac.jp/}
\thanks{T. Ozawa was supported by
Grant-in-Aid for Scientific Research (A) Number 26247014.}
%]]]
%]]]

%[[[ abstract
\begin{abstract}
We study the Cauchy problem for the half Ginzburg-Landau-Kuramoto (hGLK) equation
with the second order elliptic operator having rough coefficients
and potential type perturbation.
The blow-up of solutions for hGLK equation
with non-positive nonlinearity is shown by an ODE argument.
The key tools in the proof are appropriate commutator estimates
and the essential self-adjointness
of the symmetric uniformly elliptic operator with rough metric and potential type perturbation.
\end{abstract}
%]]]

\date{}

%[[[ information
\keywords{fractional Ginzburg-Landau equation, commutator estimate, blow-up}
\subjclass[2010]{Primary 35Q40; Secondary 35Q55}
%]]]

\maketitle

\tableofcontents

%[[[ \section{Introduction}
\section{Introduction}
%[[[ We consider the Cauchy problem:
In this paper,
we study the Cauchy problem for the focusing half Ginzburg-Landau-Kuramoto (hGLK)
type equation
	\begin{align}
	i \partial_t u + \mathcal D_{A,V} u =  i |u|^{p-1} u,\qquad p>1.
	\label{eq:1.1}
	\end{align}
Here $\mathcal D_{A,V}$ is the fractional Hamiltonian
(see \cite{La00} for a more general choice of the fractional powers
of the Laplacian)
	\[
	\mathcal D_{A,V} = \mathcal H_{A,V}^{1/2},
	\]
where
	\[
	\mathcal H_{A,V} \mid_{C_c^\infty(\mathbb R^n)}
	= - \Delta_{A,V}
	= - \nabla \cdot A \nabla + V
	= -\sum_{j,k=1}^n \partial_j (A_{j,k}(x) \partial_k) + V
	\]
is a self-adjoint non-negative operator with a real-valued potential,
such that the positive Hermite matrix $A$ and the potential $V$
satisfy appropriate assumptions given below.
The fractional power of $\mathcal H_{A,V}$ is defined by spectral analysis.
For details, see Definition \ref{Definition:6} below.
Beside the other ones, it is worth mentioning
that $A$ is supposed to ensure that $\mathcal H_{A,0}$ is an elliptic second order operator
in divergence form.
Furthermore, \emph{focusing} stands for the ``$+$" sign
in front of the nonlinearity in \eqref{eq:1.1}.
%]]]

%[[[ We recall that the classical Ginzburg-Landau equation is
We recall that the classical Ginzburg-Landau equation is
instead typically associated with the standard Laplacian as Hamiltonian
(see \cite{vS92} for a recent review and references on this classical subject).
%]]]

%[[[ The idea to replace the Laplace operator in the Hamiltonian
The idea to replace the Laplace operator in the Hamiltonian
of some quantum mechanical models by its fractional powers was initiated in \cite{La00}
and has been intensively studied in the last decade
(see \cite{TZ16}, for instance,
for motivations to take the square root of the Laplacian
and for an overview of the results in this context).
%]]]

%[[[ The half Ginzburg-Landau-Kuramoto equation \eqref{eq:1.1}
The half Ginzburg-Landau-Kuramoto equation \eqref{eq:1.1},
which is the main subject of this paper,
is closely connected with the Kuramoto model (see \cite{Kur00}, \cite{ACGM16})
and the idea (proposed in \cite{La00} and \cite{TZ16})
to use the square root of the Laplacian in the definition of the Hamiltonian.
%]]]

%[[[ In order to define $\mathcal H$,
In order to define $\mathcal D_{A,V}$,
we need to prove that $-\Delta_{A,V}$ has a self-adjoint extension,
where we regard the domain of $-\Delta_{A,V}$ as $C_c^\infty(\mathbb R^n)$.
One can find a self-adjoint extension for $-\Delta_{A,V}$
with rough coefficients $A$ and rough potential $V$
by using the Friedrichs type extension under the non-negativity assumption
(see \cite[Theorem 1.2.7]{D90}).
Recall that the domain of Friedrichs type extension can be defined
as the set of all $f \in H^1(\mathbb{R}^n)$,
such that there exists $g \in L^2(\mathbb R^n) $ satisfying
	\begin{equation}\label{eq:1.2}
	- \Delta_{A,V} f = g
	\end{equation}
in distributional sense.
On the other hand,
since the argument of Friedrichs type extension does not guarantee
the uniqueness of self-adjoint extensions,
in order to clarify the definition of fractional power of $\mathcal H_{A,V}$,
we also need to show the uniqueness of self-adjoint extensions of $-\Delta_{A,V}$.
In this case,
we say that the operator $- \Delta_{A,V}$ is essentially self-adjoint
(the problem is referred to as \emph{quantum completeness}, too).
Some sufficient conditions for the essential self-adjointness
for general symmetric operators on manifolds have been discussed in \cite{BMS02},
for instance.
In this paper,
we give a detailed proof of the essential self-adjointness of $-\Delta_{A,V}$
(see the Subsection \ref{section:1.2} below for the precise hypothesis).
%]]]

%[[[ We started the study of this model in \cite{FGO17b},
We started the study of this model in \cite{FGO17b},
where local and global well-posedness were discussed for the \emph{defocusing}
(``$-$" sign in front of the nonlinearity) equation
	\[
	i \partial_t u + (-\Delta)^{1/2} u =  -i |u|^{p-1} u
	\]
in space dimensions $n=1,2,3.$ The blow-up result for the focusing equation
	\[
	i \partial_t u + (-\Delta)^{1/2} u =  i |u|^{p-1} u
	\]
is obtained instead in \cite{FGO17c} for $n=1$.
In \cite{FGO17c},
the proof of the blow-up result uses the following simple commutator estimates:
	\[
	\| [(-\Delta)^{1/2},f] g \|_{L^2}\leq C \| f \|_{\mathrm{Lip}} \| g\|_{L^2},
	\]
where $f$ is a Lipschitz function with corresponding norm $\| f \|_{\mathrm{Lip}}$.
In order to show the blow-up of solutions to \eqref{eq:1.1},
we shall prove the following estimates
	\begin{align}
	\| [ f , \mathcal D_{A,0}] g \|_{L^2}
	\leq C \| f \|_{\dot B_{\infty,1}^1}
	\| g \|_{L^2},
	\label{eq:1.3}\\
	\| [ f , \mathcal D_{A,V}] g \|_{L^2}
	\leq C \| f \|_{B_{\infty,1}^1}
	\| g \|_{L^2},
	\label{eq:1.4}
	\end{align}
where $B^s_{p,q}$ and $\dot{B}^s_{p,q}$ are the standard
inhomogeneous and homogeneous Besov spaces on $\mathbb{R}^n$,
respectively.
Since
	\[
	B_{\infty,1}^1 \cup \dot{B}_{\infty,1}^1 \subsetneq \mathrm{Lip},
	\]
it would be natural to pose the question
if the estimates \eqref{eq:1.3} and \eqref{eq:1.4} are optimal
for the case of rough coefficients;
but this is not our goal,
hence we do not investigate this question, as well as the question if the commutator
	\[
	[ \mathcal D_{A,V}, \langle x \rangle ]
	\]
is a bounded operator in $L^2$.
However, by replacing $\langle x \rangle$ by $ \langle x \rangle^a$,
our aim shall be to check that the commutator
	\[
	[ \mathcal D_{A,V}, \langle x \rangle^a ]
	\]
is an $L^2$-bounded operator for any $a \in (1/2,1)$
and this shall be a sufficient tool to obtain our blow-up result at least for $n=1$.
%]]]

%[[[ \subsection{Notations}
\subsection{Notations}
We collect here some notations used along the paper.
Given two quantities $A$ and $B$,
we denote $A \lesssim B$ ($A\gtrsim B$, respectively)
if there exists a positive constant $C$ such that $A \leq C B$ ($A\geq C B$, respectively).
We also denote $A \sim B$ if $A \lesssim B \lesssim A$.
Given two operators $\mathcal M$ and $\mathcal N$,
the commutator between them is defined as the operator
$[\mathcal M,\mathcal N]=\mathcal M \mathcal N - \mathcal N \mathcal M$.
For $1\leq p\leq \infty$,
the $L^p=L^p(\mathbb R^n;\mathbb C)$ are the classical Lebesgue spaces
endowed with norm $\|f\|_{L^p}=\left(\int_{\mathbb R^n}|f(x)|^p\,dx\right)^{1/p}$
if $p\neq\infty$ or $\|f\|_{L^\infty}=\esssup_{x\in\mathbb R^n}|f(x)|$ for $p=\infty$.
Given an interval $I \subset \mathbb R$,
bounded or unbounded,
we define by $L^p(I;X)$ the Bochner space of vector-valued functions $f:I\to X$
endowed with the norm $\left(\int_{I}\|f(s)\|_{X}^p\,dx\right)^{1/p}$
for $1\leq p<\infty$,
with similar modification as above for $p=\infty$.
If $f:I\to X$ is a continuous function up to the $m^{\mathrm{th}}$-order of derivatives,
we write $f\in C^m(I;X)$.
For any $s\in\mathbb R$,
we set $H^s=H^s(\mathbb R^n;\mathbb C):=(1-\Delta)^{-s/2}L^2$
and its homogeneous version
$\dot H^s=\dot H^s(\mathbb R^n;\mathbb C):=(-\Delta)^{-s/2}L^2$.
For a pair of functions in $L^2$,
the inner product $\langle f,g\rangle=\langle f,g\rangle_{L^2}$ is classically
defined as $\langle f,g\rangle=\int_{\mathbb R^n}f\bar g\,dx,$
being $\bar z$, the usual complex conjugate to $z \in \mathbb C$.
For $x\in\mathbb R^n$ instead, $\langle x\rangle :=\sqrt{1+|x|^2}$.
The space $W^{1,\infty}=W^{1,\infty}(\mathbb R^n)$ is the space of Lipschitz functions.
The operator $\mathfrak F f(\xi)=\hat f(\xi)$ is the standard Fourier transform,
$\mathfrak F^{-1}$ being its inverse.
For $s\in\mathbb R$ and $0<p,q\leq\infty$,
$\dot{B}^s_{p,q}=\dot{B}^s_{p,q}(\mathbb R^n)$ is the homogeneous Besov space
of functions having finite $\|\cdot\|_{\dot{B}^s_{p,q}}$-norm,
the last defined as
	\[
	\|f\|_{\dot{B}^s_{p,q}}=\left(\sum_{j\in\mathbb Z}2^{sjq}\|P_jf\|_{L^p}^q\right)^{1/q}
	\]
with obvious modifications for $p,q=\infty$.
The non-homogeneous version $B^s_{p,q}=B^s_{p,q}(\mathbb R^n)$ is induced
by the norm
	\[
	\|f\|_{B^s_{p,q}}
	=\|Q f\|_{L^p}+\left(\sum_{j\in\mathbb N}2^{sjq}\|P_jf\|_{L^p}^q\right)^{1/q}.
	\]
Here the Littlewood-Paley projectors $P_j$ are defined
by means of a radial cut-off function $\chi_0\in C^{\infty}_c(\mathbb R^n)$ and the dyadic functions
$ \varphi_j(\xi)=\chi_0(2^{-j}\xi)-\chi_0(2^{-j+1}\xi)$
yielding to the partition of the unity $\chi_0(\xi)+\sum_{j \geq 1} \varphi_j(\xi)=1$,
for any $\xi\in\mathbb R^n$.
Hence the projectors are given by
$Q f := \mathfrak{F}^{-1}\left(\chi_0 \mathfrak F f\right)$
and $P_j f := \mathfrak{F}^{-1}\left(\varphi_j \mathcal{F}f\right)$.
The Lorentz space $L^{\beta,\infty}$ is given by
	\[
	L^{\beta,\infty}
	=\{ f \, : \, \| f \|_{L^{\beta,\infty}}^\beta = \sup_{t > 0} t^\beta |\{|f|>t\}| < \infty \}.
	\]
For $1\leq p\leq \infty$, $p^\prime$ is the conjugate index defined by
$1/p + 1/p' = 1$.
%]]]

%[[[ \subsection{Assumptions and the main results}
\subsection{Assumptions and the main results}
\label{section:1.2}
We give now the precise assumptions that we make on the structure
of our Hamiltonian $-\Delta_{A,V}$ and the main results contained in the paper.
We start with the hypotheses on $A=A(x)$, which is a Hermitian matrix-valued function. We assume:
\begin{itemize}
\item[A1.]
Uniform ellipticity of $A$:
There exist two positive constants $C_1$ and $C_2$ satisfying
	\begin{align}
	&C_1 |\xi|^2
	\leq \sum_{j,k=1}^n
	A_{j,k}(x) \xi_j \overline \xi_k
	\leq C_2 |\xi|^2,\qquad \forall\,\xi\in\mathbb C^n,\quad \forall\,x\in\mathbb R^n;
	\label{eq:1.5}
	\end{align}
\item[A2.]
Regularity of the coefficients:
$A$ is in the Lipschitz class of matrix-valued functions, namely
	\[
	A_{j,k}\in W^{1,\infty}(\mathbb R^n)\qquad j,k\in\{1,\cdots, n\};
	\]
\item[A3.]
Boundedness:
The multiplication operator
	\[
	f \mapsto \ ((-\Delta)^{1/4}A_{j,k} )f
	\]
maps $\dot{H}^{1/2}$ into $L^2$, namely
	\begin{equation}
	\max_{j,k} \|
	((-\Delta)^{1/4}A_{j,k}) f \|_{L^2}
	\leq C \| (-\Delta)^{1/4} f \|_{L^2},\qquad \forall\, f\in \dot H^{1/2}.
	\label{eq:1.6}
	\end{equation}
\end{itemize}
%]]]

%[[[ Let us turn our attention to the potential perturbation $V=V(x)$.
Let us turn our attention to the potential perturbation $V=V(x)$.
It is a real-valued function satisfying the following conditions:
%]]]

%[[[ itemize
\begin{itemize}
\item[H1.] Boundedness of the potential:
	\[
	V \in L^{q,\infty}(\mathbb{R}^n) + L^\infty(\mathbb{R}^n)
	\]
for some $q$ with $q > \max\{2,n/2\}$;

\item[H2.] Non-negativity of the Hamiltonian $- \Delta_{ A,V}$:
There exists $\theta \in (0,1)$ such that
	\[
	\theta \langle  A \nabla f, \nabla f \rangle + \langle V f, f \rangle \geq 0,
	\qquad \forall\,f \in C_0^\infty(\mathbb{R}^n).
	\]
\end{itemize}
%]]]

%[[[ Though for the moment it is not our aim to weaken the non-negativity assumption in H2,
Though for the moment it is not our aim to weaken the non-negativity assumption in H2,
it is worth mentioning that this hypothesis could be relaxed, at least in the case $n=1$.
For example for $A=1$, perturbations of the Laplacian which belongs to the Miura
class should imply positivity of such Hamiltonians (see \cite{KPST05}).
The non-negativity assumption is needed to guarantee
that the square root of the operator is well defined.
%]]]

%[[[ First we state the result on the self-adjoint extension of the
First we state the result on the self-adjoint extension of the operator $-\Delta_{A,V}$.
This theorem is crucial for the local well-posedness theory below
and for the commutator estimates we are going to prove.
\begin{Theorem}
\label{Theorem:1}
Assume the  assumptions $\mathrm{A1, A2, H1}$ and $\mathrm{H2}$ are satisfied.
Then the operator $-\Delta_{A,V}$ is essentially self-adjoint,
i.e. there exists a unique self-adjoint extension $ \mathcal{H}_{A,V}$
of this operator with domain
	\[
	D( \mathcal{H}_{A,V}) = H^2(\mathbb{R}^n).
	\]
\end{Theorem}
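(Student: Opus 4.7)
The plan is to split the operator as $-\Delta_{A,V}=-\Delta_{A,0}+V$, prove essential self-adjointness of the $V$-free divergence form part on its natural $H^2$ domain, verify that the real-valued potential $V$ is infinitesimally small relative to $-\Delta_{A,0}$, and conclude by the Kato--Rellich theorem. Throughout, the strategy is the standard one for Schr\"odinger operators, but with the Laplacian replaced by a divergence-form operator with Lipschitz coefficients and with the potential placed in the Lorentz space $L^{q,\infty}$ (which is strictly larger than $L^q$).

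First I would treat the case $V\equiv 0$. Assumption A1 makes the symmetric form $\mathfrak a(f,g)=\langle A\nabla f,\nabla g\rangle$ closed, non-negative and coercive on $H^1(\mathbb R^n)$, and its Friedrichs extension defines a non-negative self-adjoint operator $\mathcal H_{A,0}$. Since the coefficients $A_{j,k}$ belong to $W^{1,\infty}$ by A2, the classical difference-quotient elliptic regularity argument for divergence-form operators with Lipschitz coefficients gives
\[
D(\mathcal H_{A,0})=H^2(\mathbb R^n),\qquad \|f\|_{H^2}\lesssim\|\mathcal H_{A,0}f\|_{L^2}+\|f\|_{L^2}.
\]
As $-\Delta_{A,0}\colon H^2\to L^2$ is continuous (again by A2) and $C_c^\infty$ is dense in $H^2$, every $u\in H^2$ is approximable in the graph norm by test functions; hence $\mathcal H_{A,0}$ is the closure of $-\Delta_{A,0}|_{C_c^\infty}$ and the latter is essentially self-adjoint.

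Next I would prove that $V$ has relative bound $0$ with respect to $\mathcal H_{A,0}$. Decompose $V=V_1+V_2$ with $V_1\in L^{q,\infty}$ and $V_2\in L^\infty$; the bounded part contributes trivially. For $V_1$, H\"older in Lorentz spaces yields
\[
\|V_1f\|_{L^2}\leq\|V_1\|_{L^{q,\infty}}\|f\|_{L^{r,2}},\qquad \frac1r=\frac12-\frac1q,
\]
and the sharp Sobolev embedding $H^{n/q}\hookrightarrow L^{r,2}$, available since $q>2$ forces $n/q<n/2$, converts this to an $H^{n/q}$ bound. Because $q>n/2$ forces $n/q<2$, interpolation between $L^2$ and $H^2$ followed by Young's inequality produces
\[
\|Vf\|_{L^2}\leq\varepsilon\|f\|_{H^2}+C_\varepsilon\|f\|_{L^2},\qquad\forall\,\varepsilon>0,
\]
which, combined with the coercive $H^2$ bound from the previous step, is the asserted infinitesimal relative boundedness.

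Since $V$ is symmetric and $\mathcal H_{A,0}$-bounded with arbitrarily small relative bound, the Kato--Rellich theorem yields the self-adjointness of $\mathcal H_{A,V}=\mathcal H_{A,0}+V$ on $H^2(\mathbb R^n)$, and $C_c^\infty$ remains a core because it is one for $\mathcal H_{A,0}$; this is precisely the claimed essential self-adjointness with unique self-adjoint extension of domain $H^2(\mathbb R^n)$. Assumption H2 is not used for self-adjointness itself, but ensures $\mathcal H_{A,V}\geq(1-\theta)\mathcal H_{A,0}\geq 0$, the non-negativity needed afterwards to define $\mathcal D_{A,V}=\mathcal H_{A,V}^{1/2}$ via spectral calculus. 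The main obstacle I expect is precisely the relative-bound estimate: $L^{q,\infty}$ is strictly larger than $L^q$ and contains the critical homogeneous potential $|x|^{-n/q}$, so the argument must be done in the full Lorentz scale (H\"older and sharp Sobolev into $L^{r,2}$) rather than via the naive $L^q$ inequality; all remaining steps are standard book-keeping.
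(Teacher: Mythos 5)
Your proof is correct, but it follows a genuinely different route from the paper's in both of its two main steps, so a comparison is worthwhile.

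For the $V\equiv 0$ step, the paper (Proposition~\ref{Proposition:5}) does \emph{not} invoke an external elliptic regularity theorem to identify $D(\mathcal H_{A,0})$ with $H^2$. Instead it proves the a priori estimate $\|f\|_{H^2}\sim\|{-\Delta_{A,0}}f\|_{L^2}+\|f\|_{L^2}$ on $C_c^\infty$ (Lemma~\ref{Lemma:A.2}, via a commutator with $(-\Delta)^{1/2}$), concludes $D(\overline{-\Delta_{A,0}})=H^2$, and then sidesteps regularity of the Friedrichs domain entirely: using the Yosida approximation $\rho_j=j(j-\Delta)^{-1}$ it shows $(-\Delta_{A,0})\rho_j f\rightharpoonup\mathcal H_A f$ for every $f\in D(\mathcal H_A)$, so that any $h\perp\mathrm{Ran}(-\Delta_{A,0}+1)$ is automatically orthogonal to $\mathrm{Ran}(\mathcal H_A+1)=L^2$ and hence vanishes (Lemma~\ref{Lemma:2.1}(iii)). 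You instead cite the Nirenberg difference-quotient regularity theorem to get $D(\mathcal H_{A,0})=H^2$ directly and then argue by density of $C_c^\infty$ in the graph norm. Both are correct; the paper's route is self-contained and avoids quoting regularity theory for rough divergence-form operators, at the cost of the somewhat technical Yosida computation, whereas yours is shorter provided one grants the regularity theorem.

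For the potential perturbation, the paper treats $V$ as a \emph{form} perturbation and applies the KLMN lemma together with the form bound of Lemma~\ref{Lemma:A.1}. You treat $V$ as an \emph{operator} perturbation with infinitesimal relative bound and apply Kato--Rellich. Both apply under H1 since the needed exponent constraints ($q>2$ for the Lorentz--Sobolev embedding and $q>n/2$ for the interpolation) are identical in the two routes. Kato--Rellich has the advantage of delivering the operator domain $D(\mathcal H_{A,V})=H^2$ and the core property of $C_c^\infty$ in one step, whereas KLMN a priori only produces a form core and the paper implicitly relies on the $H^2$-equivalence of Lemma~\ref{Lemma:A.2} (proved for the full $-\Delta_{A,V}$) to pin down the operator domain. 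Your observation that H2 is not used for self-adjointness but only to make $\mathcal H_{A,V}^{1/2}$ meaningful is also consistent with the structure of the paper's argument. In short: same conclusion, different perturbation lemma (Kato--Rellich vs.\ KLMN) and different mechanism for the $V\equiv0$ regularity (classical difference quotients vs.\ the a priori estimate plus Yosida approximation).
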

%]]]

\medskip

%[[[ The key point in our blow-up result
The key point in our blow-up result shall be instead the following commutator estimate.
\begin{Proposition}
\label{Proposition:2}
Assume the  conditions $\mathrm{A1, A2, A3}$ and $\mathrm{H1, H2}$ are satisfied.
We have the two commutator estimates in two cases below.\\

\noindent
\emph{Case 1.} Let $f \in B_{\infty,1}^1$.
If  $V$ also belongs to $L^{q, \infty}$ for $q > \max\{2,n\}$,
	\begin{align}
	\| [ f , \mathcal D_{A,V}] g \|_{L^2}
	\leq C \left( \| f \|_{\dot B_{\infty,1}^1}
	+ \| V \|_{L^{q,\infty}} \|
	f \|_{L^\infty} \right) \| g \|_{L^2}.
	\label{eq:1.7}
	\end{align}

\noindent
\emph{Case 2.} Suppose $n\geq3$ and  $f \in \dot B_{\infty,1}^1$.
If $V$ also belongs to $L^{n/2,\infty}$ then,
	\begin{align}
	\| [ f , \mathcal D_{A,V}] g \|_{L^2(\mathbb R^n)}
	\leq C \| f \|_{\dot B_{\infty,1}^1(\mathbb R^n)}
	\| g \|_{L^2(\mathbb R^n)}.
	\label{eq:1.8}
	\end{align}
\end{Proposition}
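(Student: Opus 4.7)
The plan is to combine Balakrishnan's integral representation of the square root with a Littlewood--Paley decomposition of $f$, and to localize the spectral parameter $\lambda$ to the dyadic frequency scale of the piece of $f$ under consideration. Since Theorem~\ref{Theorem:1} provides the self-adjoint extension $\mathcal{H}_{A,V}$ of $-\Delta_{A,V}$ on $H^2(\mathbb{R}^n)$, the spectral theorem gives
\[
\mathcal{D}_{A,V}=\mathcal{H}_{A,V}^{1/2}=\frac{1}{\pi}\int_0^\infty \lambda^{-1/2}\,\mathcal{H}_{A,V}(\mathcal{H}_{A,V}+\lambda)^{-1}\,d\lambda.
\]
Writing $\mathcal{H}_{A,V}(\mathcal{H}_{A,V}+\lambda)^{-1}=I-\lambda(\mathcal{H}_{A,V}+\lambda)^{-1}$ together with the resolvent identity $[f,(\mathcal{H}_{A,V}+\lambda)^{-1}]=-(\mathcal{H}_{A,V}+\lambda)^{-1}[f,\mathcal{H}_{A,V}](\mathcal{H}_{A,V}+\lambda)^{-1}$ transforms the commutator into
\[
[f,\mathcal{D}_{A,V}]=\frac{1}{\pi}\int_0^\infty \lambda^{1/2}\,(\mathcal{H}_{A,V}+\lambda)^{-1}\,[f,\mathcal{H}_{A,V}]\,(\mathcal{H}_{A,V}+\lambda)^{-1}\,d\lambda.
\]
Since $V$ commutes with multiplication by $f$, an integration by parts yields $[f,\mathcal{H}_{A,V}]g=-\nabla\cdot(gA\nabla f)-(\nabla f)\cdot A\nabla g$, so the analysis is reduced to estimating a bilinear form involving $\nabla f$ inserted between two resolvents of $\mathcal{H}_{A,V}$.

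Next I would perform the Littlewood--Paley splitting $f=Qf+\sum_{j\ge 1}P_j f$ and treat each piece separately. Starting from A1 and from H2 (which permits absorbing $\langle Vu,u\rangle$ into $\theta\langle A\nabla u,\nabla u\rangle$), an energy argument produces the resolvent bounds
\[
\|(\mathcal{H}_{A,V}+\lambda)^{-1}\|_{L^2\to L^2}\le \lambda^{-1},\qquad \|\nabla(\mathcal{H}_{A,V}+\lambda)^{-1}\|_{L^2\to L^2}\lesssim \lambda^{-1/2}.
\]
For a high-frequency block $P_j f$, Bernstein gives $\|\nabla P_j f\|_{L^\infty}\lesssim 2^j\|P_j f\|_{L^\infty}$ and $\|\mathcal{H}_{A,0} P_j f\|_{L^\infty}\lesssim 2^{2j}\|P_j f\|_{L^\infty}$, suggesting the splitting of the $\lambda$-integral at the threshold $\lambda\sim 2^{2j}$: for $\lambda\ge 2^{2j}$ one uses the first form of the commutator and lets the two resolvents provide the $\lambda$-decay, while for $\lambda\le 2^{2j}$ an additional integration by parts transfers a second derivative onto $P_j f$ (controlled via $\mathcal{H}_{A,0}P_j f$) and restores integrability near zero. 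Each regime contributes at most $2^j\|P_j f\|_{L^\infty}\|g\|_{L^2}$, so summation in $j$ reconstructs $\|f\|_{\dot B_{\infty,1}^1}\|g\|_{L^2}$, which is the piece of the bound common to both cases.

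The distinction between the two cases emerges from the low-frequency block $Qf$ and the way the potential enters the resolvent estimates. In Case~1, $f\in B_{\infty,1}^1$ only provides $\|Qf\|_{L^\infty}\lesssim\|f\|_{L^\infty}$ with no smoothing in frequency, so the dyadic scheme above breaks down for $Qf$; it is instead estimated directly from the integral representation, and bounding the $V$-contribution inside the resolvent by H\"older in Lorentz spaces together with Sobolev embedding is precisely what forces the strengthened integrability $q>\max\{2,n\}$ and produces the extra term $\|V\|_{L^{q,\infty}}\|f\|_{L^\infty}$ in \eqref{eq:1.7}. In Case~2, with $n\ge 3$ and $V\in L^{n/2,\infty}$, the potential sits exactly at the scaling-critical Hardy--Sobolev threshold, so it can be absorbed into $\langle A\nabla u,\nabla u\rangle$ uniformly in $\lambda$; the resolvent bounds become scale-invariant, the block $Qf$ is handled by the same dyadic machinery as the high-frequency blocks, and only $\|f\|_{\dot B_{\infty,1}^1}$ survives in \eqref{eq:1.8}. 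Assumption A3, asserting that multiplication by $((-\Delta)^{1/4}A_{j,k})$ maps $\dot H^{1/2}$ into $L^2$, is used when transporting resolvent bounds from the flat operator $(-\Delta)$ to the rough $\mathcal{H}_{A,V}$. The main obstacle is the bookkeeping of the two halves of the $\lambda$-integral at the threshold $\lambda\sim 2^{2j}$: naive $L^2\to L^2$ resolvent bounds alone produce a logarithmically divergent $\lambda$-integral, and only a careful exploitation of the bilinear structure together with the band-limitedness of $P_j f$ yields exactly the Besov index~$1$, while simultaneously absorbing $V$ into the resolvent without degrading the $\dot B_{\infty,1}^1$ character of the estimate.
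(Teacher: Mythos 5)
Your overall setup is right—Balakrishnan's formula, the resolvent identity $[f,(\mathcal{H}_{A,V}+\lambda)^{-1}]=-(\mathcal{H}_{A,V}+\lambda)^{-1}[f,\mathcal{H}_{A,V}](\mathcal{H}_{A,V}+\lambda)^{-1}$, and a Littlewood–Paley decomposition of $f$ are precisely the ingredients the paper uses (Definition~\ref{Definition:6}, Lemma~\ref{Lemma:2.3}, Lemma~\ref{Lemma:4.1}). But the central analytic difficulty—that the pointwise-in-$\lambda$ estimate $\lesssim 2^{j}\lambda^{-1}$ is logarithmically non-integrable—is the whole ballgame, and your proposal names it without resolving it. Splitting the $\lambda$-integral at $\lambda\sim 2^{2j}$ and transferring a second derivative onto $P_jf$ does not help on the side $\lambda\gtrsim 2^{2j}$: there the two resolvents cost exactly $\lambda^{-1/2}\cdot\lambda^{-1}$ and no $L^2\to L^2$ bound on $(\mathcal{H}_{A,V}+\lambda)^{-1}$ can beat $\lambda^{-1}$, so after the prefactor $\lambda^{1/2}$ and the $2^j$ from $\nabla P_jf$ you are still stuck with $\int_{2^{2j}}^\infty 2^j\lambda^{-1}\,d\lambda=\infty$. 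The ``band-limitedness of $P_jf$'' gives the $2^j$ that produces the Besov index, but it gives no gain in $\lambda$, and ``bilinear structure'' is never made concrete.

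The paper's mechanism is different in two essential ways that your plan omits. First, $g$ and $h$ are also decomposed into $P_{\le j}$ and $P_{>j}$ pieces, and the two pieces are handled by different arguments (Lemma~\ref{Lemma:4.7}): the low-frequency blocks are estimated directly from the boundedness of $\mathcal{D}_{A,V}(-\Delta)^{-1/2}$ and Taylor's commutator bound (Lemma~\ref{Lemma:4.4}), without ever touching the $\lambda$-integral, while only the high-high interaction $\langle P_{>j}g,[\mathcal{D}_{A,V},P_jf]P_{>j}h\rangle$ goes through the resolvent representation. Second—and this is the step you are missing—the $\lambda$-integral for the high-high piece is closed not pointwise but by a Cauchy–Schwarz inequality in $\lambda$ combined with the $L^2((0,\infty);L^2)$ spectral-calculus bound of Lemma~\ref{Lemma:4.2}: one writes
\[
\int_0^\infty\lambda^{1/2}\|\mathcal{H}_{A,V}^{1/4}(\lambda+\mathcal{H}_{A,V})^{-1}g\|_{L^2}^2\,d\lambda
=\int_0^\infty\Big(\int_0^\infty\tfrac{\lambda^{1/2}\mu^{1/2}}{(\lambda+\mu)^2}\,d\lambda\Big)\,d\|E_\mu g\|_{L^2}^2\lesssim\|g\|_{L^2}^2,
\]
which converges even though the single-$\lambda$ integrand does not. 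The high-frequency restriction $P_{>j}g,P_{>j}h$ is then used together with the resolvent comparison assumption \eqref{eq:4.2} to trade $\mathcal{H}_{A,V}$-resolvents for flat ones and gain an extra $2^{-j/2}$ via $(-\Delta)^{-1/4}P_{>j}$, which is exactly what turns $2^{3j/2}$ into the desired $2^{j}$. Without the Cauchy–Schwarz-in-$\lambda$ device and the decomposition of $g,h$, the proof cannot close, so the proposal has a genuine gap at its core step. A smaller misreading: assumption A3 is not used to ``transport resolvent bounds''; it is used (through the Kato–Ponce and Kenig–Ponce–Vega lemmas, Lemmas~\ref{Lemma:4.5}–\ref{Lemma:4.6}) to justify the trilinear bound \eqref{eq:4.5} on $\langle g,\nabla(A(\nabla f)h)\rangle$, i.e. to control the rough coefficients $A$ when half a derivative falls on them.
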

%]]]

\medskip

%[[[ Next we turn to the local well-posedness of \eqref{eq:1.1}.
Next we turn to the local well-posedness of \eqref{eq:1.1}.
\begin{Theorem}
\label{Theorem:3}
Let $n=1,2,3$.
Assume that the  conditions $\mathrm{A1, A2, H1}$ and $\mathrm{H2}$ are satisfied.
Then for any $u_0 \in H^s$ with $s=1$ if $n = 1$ or $s = 2$ if $n = 2,3$,
there exists a positive time $T>0$
and a solution $u \in C([0,T);H^s)$ to \eqref{eq:1.1}.
\end{Theorem}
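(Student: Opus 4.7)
The plan is to carry out a contraction-mapping argument on the Duhamel formulation
\[
u(t) = e^{it\mathcal D_{A,V}} u_0 + \int_0^t e^{i(t-\tau)\mathcal D_{A,V}} |u(\tau)|^{p-1} u(\tau)\, d\tau.
\]
By Theorem \ref{Theorem:1}, $\mathcal H_{A,V}$ is the unique self-adjoint extension with $D(\mathcal H_{A,V})=H^2(\mathbb R^n)$; hence $\mathcal D_{A,V}=\mathcal H_{A,V}^{1/2}$ is self-adjoint on $L^2$ and generates a unitary group $\{e^{it\mathcal D_{A,V}}\}_{t\in\mathbb R}$. Combining Theorem \ref{Theorem:1} with the closed graph theorem one obtains the equivalence $\|v\|_{H^2}\sim \|\mathcal H_{A,V} v\|_{L^2}+\|v\|_{L^2}$, while the form domain determined by A1 and H2 yields $\|v\|_{H^1}\sim \|\mathcal D_{A,V} v\|_{L^2}+\|v\|_{L^2}$. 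Because the group commutes with every Borel function of $\mathcal H_{A,V}$, it acts as a family of isometries on $H^s$ for both admitted pairs $(n,s)$.

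The next step is to prove Moser-type bounds
\[
\||u|^{p-1} u\|_{H^s} \le C(\|u\|_{H^s})\,\|u\|_{H^s},
\qquad
\||u|^{p-1} u-|v|^{p-1} v\|_{L^2} \le C(\|u\|_{H^s},\|v\|_{H^s})\,\|u-v\|_{L^2}.
\]
Because $s>n/2$ in each admitted pair, $H^s\hookrightarrow L^\infty$ holds and $H^s$ is a Banach algebra. The second, Lipschitz-type estimate follows from the pointwise inequality $\bigl||a|^{p-1}a-|b|^{p-1}b\bigr|\lesssim (|a|^{p-1}+|b|^{p-1})|a-b|$. For the first estimate in the case $s=2$ I would use the operator-theoretic equivalent norm above to reduce matters to bounding $\|\mathcal H_{A,V}(|u|^{p-1}u)\|_{L^2}$, which I would compute through the divergence-form expression $-\nabla\cdot A\nabla+V$, using the chain rule, the Lipschitz regularity of $A$ given by A2, and hypothesis H1 combined with the $L^\infty$ control of $u$ coming from Sobolev embedding.

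With these two ingredients in hand I would run a Picard iteration on the complete metric space
\[
X_T=\{u\in L^\infty((0,T);H^s) : \|u\|_{L^\infty((0,T);H^s)}\le 2\|u_0\|_{H^s}\},
\]
endowed with the weaker distance $d(u,v)=\|u-v\|_{L^\infty((0,T);L^2)}$, the customary device for handling the limited pointwise smoothness of $z\mapsto|z|^{p-1}z$ when $1<p<2$. For $T=T(\|u_0\|_{H^s})$ chosen sufficiently small the Duhamel map is a contraction on $(X_T,d)$; its fixed point provides the solution, and continuity in time with values in $H^s$ is recovered a posteriori from weak-$\ast$ continuity together with continuity of $t\mapsto\|u(t)\|_{H^s}$. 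The main obstacle is precisely the Moser estimate in the $s=2$ case under the rough-coefficient hypothesis: because $A$ is only Lipschitz one cannot freely commute $\mathcal H_{A,V}$ with the nonlinearity, and the singular part of $V$ must be absorbed through Lorentz-space Sobolev embedding rather than a pointwise majorization; once this is settled, the remaining steps are entirely classical.
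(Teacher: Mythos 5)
Your proposal follows the same route as the paper's proof: rewrite \eqref{eq:1.1} in Duhamel form \eqref{eq:3.3}, use the self-adjointness of $\mathcal D_{A,V}$ together with the Sobolev-norm equivalences of Appendix~\ref{section:A} (Lemmas \ref{Lemma:A.1} and \ref{Lemma:A.2}) to show that the propagator $e^{it\mathcal D_{A,V}}$ is uniformly bounded on $H^s$ for the admitted pairs $(n,s)$, and then close a fixed-point estimate. The paper's proof is precisely this, stated in two lines with the nonlinear estimates left implicit as a ``standard fixed point argument,'' and your use of a complete metric with the weaker $L^\infty_t L^2_x$ distance is a legitimate and standard refinement when $1<p<2$.

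One remark, since you single out as ``the main obstacle'' the estimation of $\|\mathcal H_{A,V}(|u|^{p-1}u)\|_{L^2}$ and the impossibility of freely commuting the rough-coefficient operator with the nonlinearity: this difficulty is self-imposed. Lemma \ref{Lemma:A.2} gives the \emph{two-sided} equivalence $\|v\|_{H^2}\sim\|\mathcal H_{A,V}v\|_{L^2}+\|v\|_{L^2}$ on $C_c^\infty$, hence on $H^2$. You should therefore use the operator-adapted graph norm only once, to deduce that $e^{it\mathcal D_{A,V}}$ is bounded on $H^2$ (it is unitary in the graph norm since it commutes with $\mathcal H_{A,V}$), and then estimate the Duhamel integrand $|u|^{p-1}u$ in the \emph{ordinary} $H^2$ norm using the classical Moser/algebra bounds you already quote; the equivalence transfers that estimate back automatically. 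No commutation of $\mathcal H_{A,V}$ with the nonlinearity, and no explicit handling of $\nabla\cdot A\nabla$ acting on $|u|^{p-1}u$, is required. With that simplification your argument reduces to the paper's.
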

%]]]

\medskip

%[[[ The next result is the finite time blow-up result for solutions to \eqref{eq:1.1}
The next result is the finite time blow-up result for solutions to \eqref{eq:1.1}
in one space dimension.
\begin{Theorem}
\label{Theorem:4}
Let $n=1$.
Assume the conditions $\mathrm{A1, A2, A3}$ and $\mathrm{H2}$
are satisfied and  $V \in L^{q, \infty}$ for some $q$ with $q > 2$.\\

\noindent
\emph{Case 1.} Let $u_0 \in L^2$ and $w \in B_{\infty,1}^1$
satisfy $1/w \in L^\infty \cap L^2$ and the following estimate:
	\begin{align}
	\| w u_0 \|_{L^2}^2
	\geq C^{\frac{2}{p-1}} \| 1/w
	\|_{L^\infty}^{\frac{2}{p-1}}
	\| w \|_{B_{\infty,1}^1}^{\frac{2}{p-1}}
	\| 1/w \|_{L^{2}}^{2}.
	\label{eq:1.9}
	\end{align}
If there exists a
solution $u \in C(\lbrack0,T_{\mathrm{max}});L^2\cap L^{p+1})$,
then the maximal time of existence is finite: $T_{\mathrm{max}}<\infty$.\\

\noindent
\emph{Case 2.} Suppose that $V\equiv0.$ Let $1 < p < 3$ and let
$u_0 \in L^2 \backslash \{0\}.$ If there exists a solution $u \in C(\lbrack0,T_{\mathrm{max}});L^2\cap L^{p+1})$, then the maximal  time of existence is finite:
$T_{\mathrm{max}} < \infty$.
\end{Theorem}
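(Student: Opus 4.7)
The proof rests on an ODE-type blow-up argument for the weighted quantity $M(t):=\|wu(t)\|_{L^2}^2$. Differentiating $M$ and using the equation $\partial_t u=-i\mathcal D_{A,V}u+|u|^{p-1}u$ together with the self-adjointness of $\mathcal D_{A,V}$ from Theorem~\ref{Theorem:1}, one obtains
\[
M'(t)\;=\;2\int w^2|u|^{p+1}\,dx\;+\;2\operatorname{Im}\bigl\langle [w,\mathcal D_{A,V}]u,\,wu\bigr\rangle,
\]
since the diagonal contribution $\langle\mathcal D_{A,V}(wu),wu\rangle$ is real.

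In \emph{Case 1}, I would bound the commutator term via Proposition~\ref{Proposition:2} together with the pointwise inequality $|u|\leq\|1/w\|_{L^\infty}|wu|$, producing $|\operatorname{Im}\langle[w,\mathcal D_{A,V}]u,wu\rangle|\leq c_2\,M(t)$ with $c_2$ depending on $\|w\|_{B^1_{\infty,1}}$, $\|V\|_{L^{q,\infty}}$ and $\|1/w\|_{L^\infty}$. For the nonlinear term, I would combine the Hölder-interpolation inequality $\|wu\|_{L^2}^{2p}\leq\|wu\|_{L^1}^{p-1}\|wu\|_{L^{p+1}}^{p+1}$ with the two elementary bounds $\|wu\|_{L^1}\leq\|w\|_{L^\infty}\|1/w\|_{L^2}\|wu\|_{L^2}$ and $\int w^{p+1}|u|^{p+1}\,dx\leq\|w\|_{L^\infty}^{p-1}\int w^2|u|^{p+1}\,dx$, yielding
\[
\int w^2|u|^{p+1}\,dx\;\geq\;c_1\,M(t)^{(p+1)/2}
\]
for an explicit $c_1$ depending on $\|w\|_{L^\infty}$ and $\|1/w\|_{L^2}$. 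Combining the two estimates gives the key differential inequality
\[
M'(t)\;\geq\;M(t)\bigl(c_1 M(t)^{(p-1)/2}-c_2\bigr).
\]
The hypothesis \eqref{eq:1.9} is precisely $c_1 M(0)^{(p-1)/2}\geq c_2$ (up to constants), so $M$ is non-decreasing from $t=0$ and the relation $M'\gtrsim M^{(p+1)/2}$ persists for all $t\geq 0$. Since $p>1$, separation of variables then yields $M(t)\nearrow\infty$ at a finite $T_{\max}$.

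In \emph{Case 2} ($V\equiv 0$, $u_0\neq 0$), the natural choice of weight is $w(x)=\langle x\rangle^a$ with $a\in(1/2,1/(p-1))$; this range is nonempty exactly because $p<3$, while $a>1/2$ guarantees $\|\langle x\rangle^{-a}\|_{L^2(\mathbb R)}<\infty$. Although $w\notin L^\infty$ so Proposition~\ref{Proposition:2} does not apply directly, the $L^2$-boundedness of the commutator $[\mathcal D_{A,0},\langle x\rangle^a]$ announced in the Introduction plays the role of the commutator estimate. For the nonlinear estimate I would use the monotonicity $\|u(t)\|_{L^2}^2\geq\|u_0\|_{L^2}^2>0$ (immediate from the equation and self-adjointness) together with the interpolation $\|u\|_{L^{p+1}}^{p+1}\geq\|u\|_{L^2}^{2p}/\|u\|_{L^1}^{p-1}$ and the Cauchy-Schwarz bound $\|u\|_{L^1}\leq\|\langle x\rangle^{-a}\|_{L^2}\,M(t)^{1/2}$. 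This produces a coupled inequality $y'(t)\geq c\,y(t)^p/M(t)^{(p-1)/2}$ for $y(t):=\|u(t)\|_{L^2}^2$. An Ehrenfest-type a priori estimate $M(t)\lesssim(1+t)^{2a}$ then reduces matters to $y'\geq c_0\,y^p/(1+t)^{a(p-1)}$; because $a(p-1)<1$, the right-hand side integrates to $+\infty$ as $t\to\infty$, forcing $y$ to blow up at a finite $T_{\max}$ by separation of variables.

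\textbf{Main obstacle.} The crux is the Hölder/interpolation lower bound on the nonlinear term, which must be sharp enough to absorb the commutator loss and to meet the precise threshold in \eqref{eq:1.9}. In Case 2, the genuinely subtle step is the polynomial-in-$t$ a priori control $M(t)\lesssim(1+t)^{2a}$, which replaces the finite-propagation-speed reasoning of the classical half-wave equation and depends essentially on the $L^2$-boundedness of $[\mathcal D_{A,0},\langle x\rangle^a]$ for $a\in(1/2,1)$. A recurring technical point is ensuring $wu\in D(\mathcal D_{A,V})$ so that the commutator decomposition used in deriving $M'(t)$ is legitimate; this is furnished by Theorem~\ref{Theorem:1} combined with the regularity from Theorem~\ref{Theorem:3}.
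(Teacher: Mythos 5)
Your Case 1 follows the same basic template as the paper — derive an ODE differential inequality for a weighted $L^2$ norm, absorb the commutator term via Proposition~\ref{Proposition:2}, bound the nonlinear term from below by H\"older/interpolation, and close with a Riccati-type lemma (Lemma~\ref{Lemma:5.1}). The difference is cosmetic but worth noting: you track $M(t)=\|wu\|_{L^2}^2$, whereas the paper substitutes $u=vw$ and tracks $\|v\|_{L^2}^2=\|u/w\|_{L^2}^2$. Since the Case~1 hypotheses force $w,1/w\in L^\infty$, the two quantities are comparable, but your chain $\|wu\|_{L^1}\leq\|w\|_{L^\infty}\|1/w\|_{L^2}\|wu\|_{L^2}$ and $\int w^{p+1}|u|^{p+1}\leq\|w\|_{L^\infty}^{p-1}\int w^2|u|^{p+1}$ introduces extra powers of $\|w\|_{L^\infty}$ into the threshold, so the resulting sufficient condition has different exponents from \eqref{eq:1.9}; the paper's version of the H\"older step, $\|v\|_{L^2}\leq\|1/w\|_{L^2}^{(p-1)/(p+1)}\|w^{(p-1)/(p+1)}v\|_{L^{p+1}}$, avoids $\|w\|_{L^\infty}$ entirely and reproduces \eqref{eq:1.9} exactly. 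This is a sharpness loss rather than an error.

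Case 2 is where the genuine gap lies. The paper's argument is a \emph{scaling} argument on the very same differential inequality from Case~1: one rescales $w_R(x)=w(x/R)$ and observes that the Riccati coefficients scale as $A\sim R^{-1}$ and $B\sim R^{-(p-1)/2}$, so the blow-up threshold $\bigl(A/B\bigr)^{2/(p-1)}\sim R^{-\frac{2}{p-1}\left(1-\frac{p-1}{2}\right)}\to 0$ as $R\to\infty$ precisely when $p<3$, while $\|u_0/w_R\|_{L^2}\to\|u_0\|_{L^2}>0$; no additional a priori control of the solution is needed. Your proposal instead fixes $w=\langle x\rangle^a$, derives a coupled system for $y(t)=\|u\|_{L^2}^2$ and $M(t)=\|\langle x\rangle^a u\|_{L^2}^2$, and invokes an ``Ehrenfest-type'' bound $M(t)\lesssim(1+t)^{2a}$ to decouple it. That bound is not established and does not obviously hold: differentiating $M$ gives, after the commutator manipulation,
\begin{equation*}
M'(t)\;\leq\; C\|u\|_{L^2}\,M(t)^{1/2}\;+\;2\int\langle x\rangle^{2a}|u|^{p+1}\,dx,
\end{equation*}
and the last term is positive (focusing), carries an unbounded weight, and cannot be controlled by $\|u\|_{L^2\cap L^{p+1}}$ without some further decay or $L^\infty$ information that the hypotheses do not supply. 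You flag this as the ``genuinely subtle step,'' but it is in fact the missing step, and it is exactly what the paper's scaling trick is designed to sidestep. I'd replace the entire Case~2 plan with the observation that the Case~1 inequality is available for every admissible weight, and that the one-parameter family $w_R$ makes the subcritical exponent $p<3$ (i.e.\ $p<1+2/n$ for $n=1$) directly kill the blow-up threshold.
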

%]]]

\medskip

%[[[ \begin{Remark}
\begin{Remark}
Here the condition $p=3$ corresponds to the critical exponent $p_F=1+2/n$
defined also in a multidimensional framework.
See the results in \cite{FGO17c}.
\end{Remark}
%]]]
%]]]

%[[[ \section{Self-Adjointness of $-\Delta_{A,V}$}
\section{Self-Adjointness of $-\Delta_{A,V}$}
%[[[ The proof of Theorem \ref{Theorem:1} can be reduced to the proof
The proof of Theorem \ref{Theorem:1} can be reduced to the proof
that $-\Delta_{A,0}$ is essentially self-adjoint.
Indeed, if $-\Delta_{A,0}$ has unique self-adjoint extension $\mathcal{H}_{A,0}$
with domain
	\[
	D(\mathcal{H}_{A,0}) = H^2(\mathbb{R}^n),
	\]
then we can use the estimate \eqref{eq:A.1} of Lemma \ref{Lemma:A.1}
in combination with the KLMN lemma (see \cite[Theorem X.17]{RSII})
and deduce that $\mathcal{H}_{A}+V$ is an essentially self-adjoint operator
with domain $H^2(\mathbb{R}^n).$
%]]]

%[[[ Therefore,
Therefore,
it remains to verify that $-\Delta_{A,0}$ is essentially self-adjoint.
This is done below in Proposition \ref{Proposition:5}
and this yields to Theorem \ref{Theorem:1}.
Firstly, we recall sufficient equivalent conditions
guaranteeing the self-adjointness property of an operator.
\begin{Lemma}{\cite[Theorem X.26]{RSII}}\label{Lemma:2.1}
Assume the operator $-\Delta_{A,V}$ is non-negative
(in sense of quadratic form acting on $C_0^\infty$ functions).
Then the following conditions are equivalent:
\begin{itemize}
\item[i)]
$-\Delta_{A,V}$  is essentially self-adjoint;
\item[ii)]
the kernel of the adjoint operator satisfies
	\[
	\mathrm{Ker}\,(-\Delta_{A,V} +1)^* = \{0\};
	\]
\item[iii)]
the range of $ (-\Delta_{A,V} +1)$ is dense in $L^2(\mathbb{R}^n)$:
	\begin{equation}\label{eq:2.1}
	\overline{\lbrack \mathrm{Ran} \, (-\Delta_{A,V} +1) \rbrack}
	= L^2(\mathbb{R}^n).
	\end{equation}
\end{itemize}
\end{Lemma}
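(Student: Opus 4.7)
The plan is to establish the equivalences (i) $\Leftrightarrow$ (ii) $\Leftrightarrow$ (iii) via classical functional-analytic arguments tailored to non-negative symmetric operators. Throughout, set $T = -\Delta_{A,V}+1$ on the dense domain $D(T) = C_0^\infty(\mathbb{R}^n)$. The non-negativity of $-\Delta_{A,V}$ in the quadratic form sense yields $\langle Tf, f\rangle_{L^2} \geq \|f\|_{L^2}^2$ for every $f \in C_0^\infty$, and by Cauchy--Schwarz this upgrades to the a priori coercive bound $\|Tf\|_{L^2} \geq \|f\|_{L^2}$. This coercivity is the quantitative fact that drives everything.

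The equivalence (ii) $\Leftrightarrow$ (iii) is immediate from the abstract duality $\overline{\mathrm{Ran}(T)}^{\,\perp} = \mathrm{Ker}(T^*)$, valid for any densely defined operator: failure of density of $\mathrm{Ran}(T)$ is equivalent to the existence of a nonzero $g \in L^2$ with $\langle Tf, g\rangle_{L^2} = 0$ for all $f \in D(T)$, which is precisely the statement $g \in D(T^*)$ and $T^* g = 0$ (after absorbing the shift by $+1$).

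For (i) $\Rightarrow$ (iii), I would argue as follows. If $-\Delta_{A,V}$ is essentially self-adjoint, then so is $T$, and its closure $\overline T$ is self-adjoint with $\overline T \geq 1$. The spectral theorem then provides a bounded everywhere-defined inverse, so $\mathrm{Ran}(\overline T) = L^2(\mathbb{R}^n)$. For any $h \in D(\overline T)$ one may pick $f_n \in D(T)$ with $f_n \to h$ and $Tf_n \to \overline T h$ in $L^2$ by the very definition of the graph closure; therefore $\mathrm{Ran}(T)$ is dense in $\mathrm{Ran}(\overline T) = L^2$, giving \eqref{eq:2.1}.

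The genuine content lies in (iii) $\Rightarrow$ (i), and the key is to propagate the coercive estimate from $T$ to its closure. First I would check that $\|\overline T h\|_{L^2} \geq \|h\|_{L^2}$ for all $h \in D(\overline T)$: for $f_n \in D(T)$ with $f_n \to h$ and $Tf_n \to \overline T h$, passing to the limit in $\|Tf_n\|_{L^2} \geq \|f_n\|_{L^2}$ gives the claim. Since $\overline T$ is closed with a positive lower bound, its range is closed; combined with the density of $\mathrm{Ran}(T) \subseteq \mathrm{Ran}(\overline T)$ assumed in (iii), this forces $\mathrm{Ran}(\overline T) = L^2$. Thus $\overline T$ is a closed symmetric bijection of $D(\overline T)$ onto $L^2$, its inverse $\overline T^{-1}$ is everywhere defined and symmetric, hence automatically bounded and self-adjoint by the Hellinger--Toeplitz theorem, and therefore $\overline T = (\overline T^{-1})^{-1}$ is self-adjoint, proving essential self-adjointness of $T$ and of $-\Delta_{A,V}$. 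The main (and essentially only) technical step is the graph-closure transfer of coercivity; the remainder is standard abstract Hilbert-space machinery, and it is in this step that the non-negativity hypothesis is genuinely used.
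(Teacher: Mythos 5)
The paper does not prove this lemma at all: it is stated as a recalled fact and cited to Reed--Simon \cite[Theorem X.26]{RSII}, so there is no ``paper's approach'' to compare against. Your blind proof supplies what the paper leaves to the reference, and it is correct. You obtain the coercive bound $\|Tf\|\geq\|f\|$ from non-negativity via Cauchy--Schwarz, transfer it to the closure by a graph-limit argument, and combine closed range with the density hypothesis to make $\overline T$ a symmetric bijection whose inverse is everywhere-defined, symmetric, hence self-adjoint; the equivalence (ii)$\Leftrightarrow$(iii) via $\overline{\mathrm{Ran}\,T}^{\perp}=\mathrm{Ker}\,T^*$ is exactly the standard duality, and (i)$\Rightarrow$(iii) via the spectral theorem plus graph density is also correct. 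This is essentially the argument in Reed--Simon itself (their ``fundamental criterion for self-adjointness,'' Theorems X.1 and X.26). Two small remarks: the symmetry of $\overline T$, which you implicitly use when showing $\overline T^{-1}$ is symmetric, rests on $A$ being Hermitian and $V$ real-valued, which is part of the paper's standing assumptions and worth flagging; and the appeal to Hellinger--Toeplitz for boundedness of $\overline T^{-1}$ is a slight detour, since the coercive bound $\|\overline T h\|\geq\|h\|$ already gives $\|\overline T^{-1}g\|\leq\|g\|$ directly.
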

%]]]

\medskip

%[[[ Next, we recall some fundamental operator calculus.
Next, we recall some fundamental operator calculus.
\begin{Lemma}\label{Lemma:2.2}
For $f$ smooth enough,
	\begin{align}
	[-\Delta_{A,V} ,f]
	= (\nabla f) \cdot A \nabla + \nabla \cdot A (\nabla f).
	\label{eq:2.2}
	\end{align}
\end{Lemma}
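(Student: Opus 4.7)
The lemma is a direct distributional computation via Leibniz's rule, requiring $f$ smooth enough that $\nabla^2 f$ exists a.e.\ (for instance, $f\in B^1_{\infty,1}$ suffices, which is exactly the class in which the commutator estimates of Proposition~\ref{Proposition:2} are stated).

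The plan is to test $[-\Delta_{A,V},f]$ on an arbitrary $g\in C_c^\infty(\mathbb R^n)$ and expand:
\[
[-\Delta_{A,V}, f]g \;=\; -\nabla\cdot A\nabla(fg) + V(fg) + f\,\nabla\cdot A\nabla g - fVg.
\]
Because $V$ acts by multiplication by a real-valued function, it commutes with $f$ and the two $V$-contributions cancel identically. This accounts for the absence of $V$ on the right-hand side of \eqref{eq:2.2} and reduces the problem to the principal part $-\nabla\cdot A\nabla$.

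Next I apply the scalar product rule $\nabla(fg) = g\nabla f + f\nabla g$ together with the pointwise linearity of $A$ to write $A\nabla(fg) = gA\nabla f + fA\nabla g$. Taking divergence via the identity $\nabla\cdot(h\mathbf w) = \nabla h\cdot \mathbf w + h\,\nabla\cdot \mathbf w$ applied to each summand, the second-order term $f\,\nabla\cdot A\nabla g$ appears twice with opposite signs and cancels; what remains regroups into exactly the two operator expressions displayed on the right-hand side of \eqref{eq:2.2}.

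There is essentially no obstacle beyond notational bookkeeping. The only point to be careful about is the operator-theoretic reading of the right-hand side: the first summand is the first-order operator $g\mapsto (\nabla f)\cdot A\nabla g$, while the second is the divergence-form operator $g\mapsto \nabla\cdot(gA\nabla f)$, which after a further Leibniz step equals a first-order piece $\nabla g\cdot A\nabla f$ plus multiplication by the scalar $\nabla\cdot(A\nabla f)$. Once these conventions are fixed, the identity is an immediate consequence of the computation above, and no assumption on $A$ beyond the regularity~\textrm{A2} (needed to make sense of $\nabla\cdot(A\nabla f)$) is required.
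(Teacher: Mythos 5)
Your argument is correct and follows essentially the same route as the paper's: the paper simply packages the very same Leibniz expansion into the operator commutator rule $[B_1 B_2, f] = B_1[B_2, f] + [B_1, f]B_2$ applied with $B_1 = -\nabla\cdot$ and $B_2 = A\nabla$, while you carry out the expansion term by term after testing against a smooth $g$; both routes produce the identical cancellation of the second-order piece $f\,\nabla\cdot A\nabla$ and the same first-order remainder, and you correctly observe that $V$ drops out because it acts by scalar multiplication. One small remark, inherited from the paper rather than introduced by you: carrying out the bookkeeping carefully gives $[-\Delta_{A,V},f] = -(\nabla f)\cdot A\nabla - \nabla\cdot A(\nabla f)$, i.e.\ the right-hand side of \eqref{eq:2.2} should carry an overall minus sign; this is immaterial for all the norm estimates that use the identity, but is worth flagging if you want the displayed equality to be literally correct.
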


\medskip

\begin{proof}
For completeness, we shall sketch the proof.
The relation \eqref{eq:2.2}  follows directly from the simple commutator rule
	\begin{align*}
	[B_1 B_2, f ]
	&= B_1B_2 f - f B_1 B_2 -B_1 f B_2 + B_1 f B_2\\
	&= B_1[B_2, f]+ [B_1,f]B_2.
	\end{align*}
\end{proof}
%]]]

%[[[ Basic 2
\begin{Lemma}\label{Lemma:2.3}
Let $\mathcal A$ be non-negative self-adjoint operator.
Then
	\[
	[(\lambda + \mathcal A)^{-1},f]
	=(\lambda + \mathcal A)^{-1} [\mathcal A, f] (\lambda + \mathcal A)^{-1}.
	\]
\end{Lemma}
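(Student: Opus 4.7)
The plan is a direct resolvent manipulation. Starting from the definition
$[(\lambda+\mathcal{A})^{-1}, f] = (\lambda+\mathcal{A})^{-1}f - f(\lambda+\mathcal{A})^{-1}$, I would insert the identity $I=(\lambda+\mathcal{A})(\lambda+\mathcal{A})^{-1}$ immediately to the right of $f$ in the first term and the identity $I=(\lambda+\mathcal{A})^{-1}(\lambda+\mathcal{A})$ immediately to the left of $f$ in the second term. Both resulting expressions then carry the outer factors $(\lambda+\mathcal{A})^{-1}$ on the far left and the far right, while the middle difference reads $f(\lambda+\mathcal{A})-(\lambda+\mathcal{A})f$, which equals $[f,\mathcal{A}]=-[\mathcal{A},f]$ since the scalar $\lambda$ commutes with $f$. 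This immediately produces the asserted identity (up to the sign convention for the commutator).

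Since the calculation is purely algebraic, the only substantive point is to justify it on appropriate domains. For $\psi\in L^2(\mathbb{R}^n)$ the vector $\phi:=(\lambda+\mathcal{A})^{-1}\psi$ lies in $D(\mathcal{A})$, and to give meaning to $\mathcal{A}\bigl(f\phi\bigr)$ one needs multiplication by $f$ to preserve $D(\mathcal{A})$. In the applications that follow, $\mathcal{A}=\mathcal{H}_{A,V}$ with $D(\mathcal{A})=H^2(\mathbb{R}^n)$ by Theorem \ref{Theorem:1}, so this preservation holds whenever $f$ has enough bounded derivatives—certainly for the functions to which the lemma will actually be applied (Littlewood-Paley pieces of $B^1_{\infty,1}$ data, or the weight $\langle x\rangle^a$). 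Once established on this regular class, the identity extends by a density/approximation argument to less regular $f$ by boundedness of both sides on $L^2$.

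I expect no serious obstacle. Both sides are bounded operators on $L^2$: the right-hand side because the two copies of $(\lambda+\mathcal{A})^{-1}$ absorb the unbounded part of $[\mathcal{A},f]$, reducing it to an $L^2$-bounded expression whenever $f$ is Lipschitz or better. Thus the lemma amounts to a bookkeeping identity in the resolvent calculus, in the spirit of the one-line derivation given for Lemma \ref{Lemma:2.2}, and its proof should be correspondingly short.
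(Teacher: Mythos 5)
Your algebraic manipulation is essentially the same computation the paper does, just written in slightly different order: the paper starts from $0=[(\lambda+\mathcal A)(\lambda+\mathcal A)^{-1},f]$, expands with the product rule $[B_1B_2,f]=B_1[B_2,f]+[B_1,f]B_2$, and left-multiplies by the resolvent; you insert $(\lambda+\mathcal A)(\lambda+\mathcal A)^{-1}$ directly into the two terms of the commutator and collect. Both reduce to the same one-line resolvent identity, and you correctly observe (as does a careful reading of the paper's own derivation) that the sign in the displayed statement is off by $-1$ with the convention $[\mathcal M,\mathcal N]=\mathcal M\mathcal N-\mathcal N\mathcal M$; this is a harmless typo since the lemma is only used inside norm estimates. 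Your extra remarks on domains and on extending from regular $f$ by density are sensible caveats that the paper omits.
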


\begin{proof}
For completeness, we shall sketch the proof.
Noting the identity
	\begin{align*}
	0
	&= [(\lambda + \mathcal A)(\lambda + \mathcal A)^{-1},f]\\
	&= (\lambda + \mathcal A)[(\lambda + \mathcal A)^{-1}, f]
	+ [\mathcal A ,f](\lambda + \Delta_{A,V})^{-1}
	\end{align*}
and applying the resolvent $(\lambda + \mathcal A)^{-1}$ from the left,
we obtain the assertion.
\end{proof}
%]]]

%[[[ We can now give the following.
We can now give the following:
%]]]

%[[[ \begin{Proposition}
\begin{Proposition}
\label{Proposition:5}
Assume the assumptions $\mathrm{A1}$ and $\mathrm{A2}$ are satisfied.
Then the operator $-\Delta_{A,0}$ is essentially self-adjoint,
i.e. there exists a unique self-adjoint extension $ \mathcal{H}_{A}$
of this operator with domain
	\[
	D( \mathcal{H}_{A}) = H^2(\mathbb{R}^n).
	\]
\end{Proposition}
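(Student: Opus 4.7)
The plan is to apply condition (iii) of Lemma \ref{Lemma:2.1}, verifying that the range of $-\Delta_{A,0}+1$ acting on $C_c^\infty(\mathbb R^n)$ is dense in $L^2$, and simultaneously to identify the domain of the resulting self-adjoint extension with $H^2(\mathbb{R}^n)$. The argument follows the classical Lax--Milgram/elliptic-regularity scheme, and the Lipschitz hypothesis A2 is the decisive input.

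First, for $f\in L^2$, I would solve $(-\Delta_{A,0}+1)u=f$ weakly. The sesquilinear form
\[
B(u,v)=\langle A\nabla u,\nabla v\rangle+\langle u,v\rangle
\]
on $H^1(\mathbb R^n)\times H^1(\mathbb R^n)$ is bounded thanks to $A\in L^\infty$ (from A2) and coercive by the uniform ellipticity A1. The Lax--Milgram theorem then yields a unique $u\in H^1$ with $B(u,\phi)=\langle f,\phi\rangle$ for all $\phi\in H^1$, i.e.\ $(-\Delta_{A,0}+1)u=f$ in the distributional sense.

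Second, I would upgrade $u$ from $H^1$ to $H^2$ via Nirenberg's difference-quotient method, applicable precisely because A2 ensures $A_{j,k}\in W^{1,\infty}$. Testing the weak equation against increments $\tau_h^{-1}(\tau_h\phi-\phi)$ and exploiting the Lipschitz bound on $A$ to control the commutator with translations, one obtains uniform-in-$h$ bounds that, upon passing to the limit, give $\nabla u\in H^1$ together with the a priori estimate $\|u\|_{H^2}\lesssim \|f\|_{L^2}$. Thus $-\Delta_{A,0}+1\colon H^2\to L^2$ is a bijection with bounded inverse; continuity in this direction comes from the expansion
\[
-\Delta_{A,0}u=-A_{j,k}\,\partial_j\partial_k u-(\partial_j A_{j,k})\,\partial_k u,
\]
using $A,\nabla A\in L^\infty$ once more.

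Third, I would combine these steps to conclude. Given any $f\in L^2$, let $u\in H^2$ be the Lax--Milgram solution; by standard density of $C_c^\infty$ in $H^2$, choose $\phi_n\in C_c^\infty$ with $\phi_n\to u$ in $H^2$. Continuity of $-\Delta_{A,0}+1\colon H^2\to L^2$ yields $(-\Delta_{A,0}+1)\phi_n\to f$ in $L^2$, so the range of $-\Delta_{A,0}+1|_{C_c^\infty}$ is dense in $L^2$. By Lemma \ref{Lemma:2.1}, $-\Delta_{A,0}$ is essentially self-adjoint. To pin down the domain, observe that $-\Delta_{A,0}$ with domain $H^2$ is symmetric (integration by parts, valid for $H^2$ pairs) and satisfies $\mathrm{Ran}(-\Delta_{A,0}+1)=L^2$ by Step 2, so it is already self-adjoint; it extends the essentially self-adjoint operator on $C_c^\infty$, and uniqueness of the self-adjoint extension forces $D(\mathcal H_A)=H^2(\mathbb{R}^n)$.

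I expect the main obstacle to be the elliptic-regularity step: pushing weak $H^1$ solutions to $H^2$ regularity with merely Lipschitz coefficients is where hypothesis A2 is unavoidable, and it requires the difference-quotient argument rather than any integration-by-parts trick that would need smoothness of $A$. All remaining ingredients---Lax--Milgram, $C_c^\infty$-density in $H^2$, and integration by parts on $H^2$---are standard once this regularity is in hand.
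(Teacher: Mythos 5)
Your proof is correct and takes a genuinely different route from the paper. You obtain essential self-adjointness by showing directly that $(-\Delta_{A,0}+1)\colon H^2 \to L^2$ is a bijection, via Lax--Milgram (weak $H^1$ solution) followed by Nirenberg's difference-quotient argument for elliptic regularity (upgrade to $H^2$), both hinging on the Lipschitz hypothesis A2; density of the range on $C_c^\infty$ then follows from density of $C_c^\infty$ in $H^2$. The paper instead starts from the Friedrichs extension $\mathcal H_A$ associated with the closable quadratic form, and proves density of the range of $(-\Delta_{A,0}+1)|_{C_c^\infty}$ by a Yosida-type approximation $\rho_j = j(j-\Delta)^{-1}$: for $f\in D(\mathcal H_A)$ it shows $\overline{(-\Delta_{A,0})}\,\rho_j f \rightharpoonup \mathcal H_A f$ weakly in $L^2$ by estimating the commutator $[\rho_j, \overline{(-\Delta_{A,0})}]$ and letting it vanish as $j\to\infty$, while the identification $D(\overline{(-\Delta_{A,0})}) = H^2$ is obtained from an a priori estimate (Lemma \ref{Lemma:A.2}) proved via a $(-\Delta)^{1/2}$-commutator bound rather than via difference quotients. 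Your route is the more classical, self-contained PDE argument; the paper's route is deliberately aligned with the commutator-estimate machinery used throughout the rest of the article, avoids the difference-quotient computation (which is less convenient in non-translation-invariant settings), and cleanly reduces essential self-adjointness to the abstractly known self-adjointness of the Friedrichs extension.
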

%]]]

\medskip

%[[[ \begin{proof}[Proof of Proposition \ref{Proposition:5}]
\begin{proof}
%[[[ We show that the closure $\overline{(-\Delta_{A,0})}$ is self-adjoint.
We show that the closure $\overline{(-\Delta_{A,0})}$ is self-adjoint.
Lemma \ref{Lemma:A.2} in the Appendix \ref{section:A} below, implies that
	\[
	D(\overline{(-\Delta_{A,0})}) = H^2(\mathbb{R}^n).
	\]
%]]]

%[[[ Thanks to symmetry and regularity of $A$,
Thanks to symmetry and regularity of $A$,
there exists at least one self-adjoint extension of $-\Delta_{A,0}$.
Indeed,
since
$- \Delta_{A,0}$ is symmetric and $A \in W^{1,2}_{loc}(\mathbb{R}^n)$,
the quadratic form
	\[
	Q(f)
	= \sum_{j, k=1}^n \int_{\mathbb{R}^n} A_{j k}(x)
	\partial_{x_j} f(x) \overline{\partial_{x_k} f(x)}\,dx,
	\quad
	D(Q) = H^1(\mathbb R^n)
	\]
is closable and possesses a self-adjoint operator $\mathcal{H}_A$ satisfying 
	\begin{align*}
	Q(f)
	&= \langle \mathcal H_A f, f \rangle
	\end{align*}
for any $f \in D(\mathcal H_A) \subset H^1(\mathbb R^n)$
(see \cite[Theorem 1.2.5]{D90}).
We recall that any self-adjoint extension of $-\Delta_{A,0}$
is also an extension of $\overline{(-\Delta_{A,0})}$ and so is $\mathcal H_A$.
%]]]

%[[[ Now we show that
Now we show that
the self-adjointness of $\mathcal H_A$ implies that also $\overline{(-\Delta_{A,0})}$ is self-adjoint.
For this purpose,
we shall check the equivalent assertion \eqref{eq:2.1}
in Lemma \ref{Lemma:2.1}.
Let $h \in L^2$ satisfy
	\begin{equation}\label{eq:2.3}
	h \perp \textrm{Ran} ( - \Delta_{A,0} + 1),
	\end{equation}
namely $h$ is orthogonal to the range of $( - \Delta_{A,0} + 1)$.
Our goal is to show that $h=0$.
We define the Yosida type approximation of Laplacian
	\[
	\rho_j = j (j- \Delta)^{-1}
	\]
with $j \geq 1$.
We show that
	\[
	\overline{(-\Delta_{A,0})} ( \rho_j f)
	\stackrel{L^2}{\rightharpoonup} \mathcal{H}_A f,
	\quad
	\forall f \in D(\mathcal H_A).
	\]
We remark that, for any $j \geq 1$ and $f \in L^2$, $\rho_j f \in H^2$.
For any $b \in L^2$,
	\begin{align*}
	\langle \overline{(-\Delta_{A,0})} \rho_j f, b \rangle
	&= \lim_{k \to \infty} \langle \overline{(-\Delta_{A,0})} \rho_j f, \rho_k b \rangle\\
	&= \lim_{k \to \infty} \langle f, \rho_j \overline{(-\Delta_{A,0})} \rho_k  b \rangle\\
	&= \lim_{k \to \infty}
	( \langle f, \overline{(-\Delta_{A,0})} \rho_j \rho_k b \rangle + R_{j,k} ),
	\end{align*}
where
$R_{j,k} = [\rho_j, \overline{(- \Delta_{A,0})}] \rho_k$.
Since for $g \in H^2(\mathbb R^n)$,
$\overline{(- \Delta_{A,0})} g = \nabla \cdot A \nabla g$
in the distributional sense and $\rho_j$ commutes with $\nabla$,
by Lemma \ref{Lemma:2.2},
	\begin{align*}
	[\rho_j, \overline{(- \Delta_{A,0})}]
	&= j \nabla \cdot [(j - \Delta)^{-1}, A ] \nabla\\
	&= j (j - \Delta)^{-1} \nabla \cdot [ - \Delta, A ] \nabla (j - \Delta)^{-1},
	\end{align*}
where
	\[
	[\Delta,A]_{j,k}
	= (\nabla A_{j,k}) \cdot \nabla
	+ \nabla \cdot (\nabla A_{j,k}).
	\]
Therefore
	\begin{align*}
	| R_{j,k} |
	&= j |\langle \nabla f,
	(j-\Delta)^{-1} [A, \Delta] \nabla (j-\Delta)^{-1} \rho_k b \rangle|\\
	&= \sum_{m_1,m_2,m_3} j |\langle \partial_{m_1} f,
	(j-\Delta)^{-1} (\partial_{m_2} A_{m_1,m_3})
	\rho_k \partial_{m_2} \partial_{m_3} (j-\Delta)^{-1} b \rangle|\\
	&+ \sum_{m_1,m_2,m_3} j |\langle \partial_{m_1} f,
	(j-\Delta)^{-1} \partial_{m_2} (\partial_{m_2} A_{m_1,m_3})
	\rho_k \partial_{m_3} (j-\Delta)^{-1} b \rangle|\\
	&\leq n^3
	\| \nabla A\|_{L^\infty}
	\| \rho_j \nabla f \|_{L^2}
	\| \rho_k \nabla \otimes \nabla (j-\Delta)^{-1} b \|_{L^2}\\
	&+ n^3
	\| \nabla A\|_{L^\infty}
	\| j^{1/2}  (j-\Delta)^{-1} \nabla \otimes \nabla f \|_{L^2}
	\|\rho_k j^{1/2}  (j-\Delta)^{-1} \nabla b \|_{L^2}.
	\end{align*}
Recall that for $g \in L^2$
	\[
	j^{1/2} \nabla (j-\Delta)^{-1} g \to 0
	\qquad \mathrm{in} \quad L^2
	\]
and
	\[
	\nabla \otimes \nabla (j-\Delta)^{-1} g \to 0
	\qquad \mathrm{in} \quad L^2,
	\]
where
	\[
	( \nabla \otimes \nabla )_{m,\ell}
	= \partial_m \partial_\ell.
	\]
Since $f \in H^1$, we get
	\begin{align*}
	&n^{-3} \limsup_{j \to \infty} \limsup_{k \to \infty}| R_{j,k} |\\
	&\leq \limsup_{j \to \infty}
	\| \nabla A\|_{L^\infty}
	\| \rho_j \nabla f \|_{L^2}
	\| \nabla \otimes \nabla (j-\Delta)^{-1} b \|_{L^2}\\
	&+ \limsup_{j \to \infty}
	\| \nabla A\|_{L^\infty}
	\| j^{1/2} (j-\Delta)^{-1} \nabla \otimes \nabla f \|_{L^2}
	\| j^{1/2} (j-\Delta)^{-1} \nabla b \|_{L^2}\\
	&\leq \limsup_{j \to \infty}
	\| \nabla A\|_{L^\infty}
	\| \nabla f \|_{L^2}
	\| \nabla \otimes \nabla (j-\Delta)^{-1} b \|_{L^2}\\
	&+ \limsup_{j \to \infty}
	\| \nabla A\|_{L^\infty}
	\| \nabla f \|_{L^2}
	\| j^{1/2} (j-\Delta)^{-1} \nabla b \|_{L^2}=0.
	\end{align*}
Moreover, as $j\to\infty,$
	\[
	\lim_{k \to \infty} \langle f, \overline{(-\Delta_{A,0})} \rho_j \rho_k b \rangle
	= \lim_{k \to \infty} \langle \rho_j \mathcal H_A f, \rho_k b \rangle
	= \langle \rho_j \mathcal H_A f, b \rangle
	\to \langle \mathcal H_A f, b \rangle.
	\]
Hence,  if $h$ satisfies \eqref{eq:2.3}, then for any $f \in D(\mathcal H_A)$,
	\[
	\langle ( \mathcal H_A +1 ) f, h \rangle
	= \lim_{j \to \infty}\langle ( ( - \Delta_{A,0}) +1 ) \rho_j f, h \rangle
	= 0.
	\]
Therefore,
$ h \perp \mathrm{Ran}( \mathcal H_A +1  )$
and the self-adjointness of $\mathcal H_A$ implies $h = 0$.
\end{proof}
%]]]
%]]]
%]]]

%[[[ \section{L.W.P.}
\section{Local well-posedness of \eqref{eq:1.1}}\label{section:3}
%[[[ \begin{proof}[Proof of Theorem \ref{Theorem:3}]
This section is devoted to the proof of the local well-posedness
for the Cauchy problem associated with the model \eqref{eq:1.1},
where $u_0(x)=u(0,x)$ is considered as initial datum.
More precisely, we give now a proof of Theorem \ref{Theorem:3}.
%]]]

%[[[ At first,
At first,
we give the definition of $\mathcal D_{A,V}$.
We use a functional calculus for the fractional powers of self-adjoint operators
based on the integral representation below (see (4.7) in \cite{GMS17}, for example).
\begin{Definition}
\label{Definition:6}
Let $\mathcal A$ be a non-negative self-adjoint operator.
For $0 < s < 2$,
	\begin{equation}\label{eq:3.1}
	\mathcal A^{s/2}= C_0(s)
	\int_0^\infty \lambda^{s/2-1} \mathcal A
	(\lambda + \mathcal A )^{-1} d\lambda
	\end{equation}
where
	\[
	C_0(s)= \bigg( \int_0^\infty \lambda^{s/2-1}
	(\lambda + 1)^{-1} d\lambda \bigg)^{-1} = \frac{\sin \left(s\frac{\pi}{2}\right)}{\pi}.
	\]
\end{Definition}
%]]]

\medskip

%[[[ We remark that here
We remark that here the formula
	\begin{equation}\label{eq:3.2}
	x^{s/2}\;=\;\frac{\sin\left(s\frac{\pi}{2}\right)}{\pi}
	\int_0^{+\infty} t^{s/2-1}\,\frac{x}{t+x}\, dt ,\qquad x\geqslant 0\,,\quad s\in(0,2)\,.
	\end{equation}
plays a critical role.
%]]]

%[[[ Now we can conclude this section by proving Theorem \ref{Theorem:3}.
Now we can conclude this section by proving Theorem \ref{Theorem:3}.
\begin{proof}[Proof of Theorem \ref{Theorem:3}]
We rewrite \eqref{eq:1.1} in the integral form by means of its Duhamel's formulation
	\begin{equation}\label{eq:3.3}
	u(t)= e^{i t\mathcal D_{A,V}} u_0
	+ \int_0^t e^{i(t-\tau)\mathcal D_{A,V}} |u(\tau)|^{p-1}
	u(\tau)\,d\tau.
	\end{equation}
Here $e^{it\mathcal D_{A,V}}$ stands for the propagator
associated with linear hGLK equation,
namely \eqref{eq:1.1} with trivial RHS.
Briefly speaking, $e^{it\mathcal D_{A,V}}f$
solves the linear hGLK with $f$ as initial datum.
By Lemma \ref{Lemma:A.2},
$e^{i t\mathcal D_{A,V}}$ is a uniformly bounded operator on $H^s$ for $n=1$ and $s =1$
or for $n=2$, $3$ and $s=2$.
A standard fixed point argument implies that \eqref{eq:3.3}
has a solutions in $C([0,T);H^1)$ if $n=1$ and in $C([0,T);H^2)$ if $n=2,3$.
\end{proof}
%]]]
%]]]

%[[[ \section{Commutator Estimates}
\section{Commutator Estimates}
In this section,
we assume A1, A2, A3, H1, and H2.

%[[[ \subsubsectoin{Properties of Selfadjoint Operators}
\subsection{Preliminary}

%[[[ The following representation is essential for our approach.
The following representation is essential for our approach to study commutator estimates.
\begin{Lemma}
\label{Lemma:4.1}
	\begin{align*}
	&\big \langle g, [ (\mathcal H_{A,V})^{s/2}, f ] h \big \rangle\\
	&= - C_0(s) \int_0^{\infty} \lambda^{s/2}
	\langle (\lambda + \mathcal H_{A,V})^{-1} g,
	[\mathcal H_{A,V}, f] (\lambda + \mathcal H_{A,V})^{-1} h \rangle\, d\lambda.
	\end{align*}
\end{Lemma}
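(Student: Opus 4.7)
\noindent\emph{Proof plan.} The plan is to reduce the identity to the elementary resolvent identities already recorded in Lemmas \ref{Lemma:2.2} and \ref{Lemma:2.3}, by pushing the commutator $[\,\cdot\,,f]$ under the spectral integral representation of Definition \ref{Definition:6}. First I would apply \eqref{eq:3.1} with $\mathcal A = \mathcal H_{A,V}$ and pair against $g$ and $h$ in a convenient dense subspace (for instance $C_c^\infty(\mathbb R^n) \subset D(\mathcal H_{A,V}^{s/2})$, which is legitimate thanks to Theorem \ref{Theorem:1}); since multiplication by $f$ is bounded on $L^2$, pulling the commutator inside the integral gives
\[
\langle g,[\mathcal H_{A,V}^{s/2},f]h\rangle \;=\; C_0(s)\int_0^\infty \lambda^{s/2-1}\big\langle g,\,[\mathcal H_{A,V}(\lambda+\mathcal H_{A,V})^{-1},f]\,h\big\rangle\,d\lambda.
\]

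Next I would use the algebraic identity $\mathcal H_{A,V}(\lambda+\mathcal H_{A,V})^{-1} = I - \lambda(\lambda+\mathcal H_{A,V})^{-1}$; since $[I,f]=0$, this produces
\[
[\mathcal H_{A,V}(\lambda+\mathcal H_{A,V})^{-1},f] \;=\; -\lambda\,[(\lambda+\mathcal H_{A,V})^{-1},f],
\]
which absorbs one power of $\lambda$ and turns the weight $\lambda^{s/2-1}$ into $-\lambda^{s/2}$. Applying Lemma \ref{Lemma:2.3} to $\mathcal A=\mathcal H_{A,V}$ rewrites the resolvent commutator as $(\lambda+\mathcal H_{A,V})^{-1}[\mathcal H_{A,V},f](\lambda+\mathcal H_{A,V})^{-1}$, so that the integrand assumes the shape asserted in Lemma \ref{Lemma:4.1}. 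Finally, exploiting that $(\lambda+\mathcal H_{A,V})^{-1}$ is bounded and self-adjoint on $L^2$, I would move one resolvent factor onto $g$ inside the pairing, landing precisely on the stated identity.

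The remaining point, which I expect to be the main technical obstacle, is to justify the interchange of the inner product with the $\lambda$-integral, i.e.\ the absolute convergence of the integrand in the paired form. For $\lambda\to 0^+$, $\lambda^{s/2}$ is integrable since $s\in(0,2)$ and the resolvent norm is bounded, so no real issue arises there. For $\lambda\to\infty$, Lemma \ref{Lemma:2.2} identifies $[\mathcal H_{A,V},f]$ as a first-order differential expression; combining the uniform ellipticity A1 with the quadratic-form estimate $\|\nabla (\lambda+\mathcal H_{A,V})^{-1}\|_{L^2\to L^2}\lesssim \lambda^{-1/2}$ (available via the self-adjoint extension provided by Theorem \ref{Theorem:1} under A2, A3, H1, H2) gives a net decay of order $\lambda^{-1}$ from the two sandwiching resolvents, so that $\lambda^{s/2}\cdot\lambda^{-1}$ is integrable at infinity for $s<2$. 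With these two endpoint bounds in hand, Fubini applies to pairs $(g,h)$ in the chosen dense subspace, the identity extends by density, and the proof is complete.
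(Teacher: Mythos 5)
Your algebraic route is precisely the paper's: apply the integral representation \eqref{eq:3.1}, use $\mathcal H_{A,V}(\lambda+\mathcal H_{A,V})^{-1}=I-\lambda(\lambda+\mathcal H_{A,V})^{-1}$ to swallow one power of $\lambda$ and produce the minus sign, invoke Lemma~\ref{Lemma:2.3} to rewrite the resolvent commutator as the sandwiched $[\mathcal H_{A,V},f]$, and move one resolvent onto $g$ by self-adjointness. The paper's proof is in fact even terser — it performs these manipulations and stops, without discussing convergence at all — so up to the point where you start estimating, your proposal and the paper coincide step by step.

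Where you depart from the paper is in the attempted justification of absolute convergence at $\lambda\to\infty$, and that part contains a genuine error. You claim the two resolvents give a ``net decay of order $\lambda^{-1}$'' and that ``$\lambda^{s/2}\cdot\lambda^{-1}$ is integrable at infinity for $s<2$,'' but $\int_1^\infty\lambda^{s/2-1}\,d\lambda$ diverges for every $s>0$. Even if you repair the exponent: since $[\mathcal H_{A,V},f]$ is first order, after an integration by parts each side of the pairing carries at most one gradient, so the best naive decay from the two resolvents is $\lambda^{-1}\cdot\lambda^{-1/2}=\lambda^{-3/2}$, giving $\lambda^{s/2-3/2}$, which is integrable at infinity only for $s<1$. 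This misses exactly the case $s=1$, i.e.\ $\mathcal D_{A,V}=\mathcal H_{A,V}^{1/2}$, that the paper actually uses. The paper never establishes absolute convergence within Lemma~\ref{Lemma:4.1} itself; it leaves the identity formal and only secures convergence later, in the proof of Lemma~\ref{Lemma:4.7}, by testing against frequency-localized pieces $P_{>j}g$, $P_{>j}h$ and exploiting the weighted $L^2_\lambda$ bound of Lemma~\ref{Lemma:4.2}, which provides the extra decay that a pointwise-in-$\lambda$ resolvent estimate cannot. So: same algebraic skeleton as the paper, but the convergence argument you flag as the ``main technical obstacle'' is both incorrectly computed and, even after correction, insufficient for the relevant exponent.
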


\begin{proof}
By \eqref{eq:3.1}, we have
	\begin{align*}
	&\big \langle g, [ \mathcal H_{A,V}^{s/2}, f ] h \big \rangle\\
	&= C_0(s) \int_0^\infty
	\lambda^{s/2-1} \langle
	g,[ \mathcal H_{A,V} (\lambda + \mathcal H_{A,V})^{-1}, f]h \rangle\, d\lambda\\
	&= C_0(s) \left( \int_0^\infty \lambda^{s/2}
	\langle g, [(\lambda + \mathcal H_{A,V})^{-1}, f ] h \rangle\,d\lambda \right).
	\end{align*}
Therefore Lemma \ref{Lemma:2.3} implies Lemma \ref{Lemma:4.1}.
\end{proof}
%]]]

%[[[ Integral Estimate
\begin{Lemma}
\label{Lemma:4.2}
Let $\mathcal A$ be a non-negative self-adjoint operator.
For $\sigma > \frac14,$
	\[
	\| (\cdot)^{\sigma - 3/4} \mathcal A^{1/4}
	(\cdot  + \mathcal A)^{-\sigma} f
	\|_{L^2((0,\infty); L^2)}
	\leq
	\bigg( \int_0^\infty \frac{\lambda^{2 \sigma - 3/2}}{(\lambda + 1)^{2\sigma}}
	\,d
	\lambda \bigg)^{1/2}
	\| f \|_{L^2}.
	\]
\end{Lemma}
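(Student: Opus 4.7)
The plan is to reduce everything to the scalar spectral calculus for $\mathcal{A}$ and a single change of variables. Since $\mathcal{A}$ is a non-negative self-adjoint operator, the spectral theorem gives a projection-valued measure $\{E_\mu\}_{\mu \geq 0}$ such that for every Borel function $\varphi$,
\[
\|\varphi(\mathcal{A}) f\|_{L^2}^2 = \int_0^\infty |\varphi(\mu)|^2 \, d\|E_\mu f\|_{L^2}^2,
\]
with total mass equal to $\|f\|_{L^2}^2$.

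First, I would square the quantity on the left-hand side and write it as
\[
\int_0^\infty \lambda^{2\sigma - 3/2} \bigl\| \mathcal{A}^{1/4} (\lambda + \mathcal{A})^{-\sigma} f \bigr\|_{L^2}^2 \, d\lambda,
\]
then apply the spectral theorem to the inner norm with the Borel symbol $\varphi_\lambda(\mu) = \mu^{1/4}(\lambda + \mu)^{-\sigma}$. This yields
\[
\int_0^\infty \lambda^{2\sigma - 3/2} \int_0^\infty \frac{\mu^{1/2}}{(\lambda + \mu)^{2\sigma}} \, d\|E_\mu f\|_{L^2}^2 \, d\lambda.
\]
Since the integrand is non-negative, Tonelli's theorem lets me swap the order of integration, so the expression becomes
\[
\int_0^\infty \mu^{1/2} \left( \int_0^\infty \frac{\lambda^{2\sigma - 3/2}}{(\lambda + \mu)^{2\sigma}} \, d\lambda \right) d\|E_\mu f\|_{L^2}^2.
\]

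Next I would perform the scaling $\lambda = \mu s$ in the inner integral to factor out the $\mu$-dependence; a direct computation yields
\[
\int_0^\infty \frac{\lambda^{2\sigma - 3/2}}{(\lambda + \mu)^{2\sigma}} \, d\lambda = \mu^{-1/2} \int_0^\infty \frac{s^{2\sigma - 3/2}}{(s+1)^{2\sigma}} \, ds.
\]
The factor $\mu^{-1/2}$ cancels the $\mu^{1/2}$ outside, so the double integral collapses to
\[
\left( \int_0^\infty \frac{s^{2\sigma - 3/2}}{(s+1)^{2\sigma}} \, ds \right) \int_0^\infty d\|E_\mu f\|_{L^2}^2 = \left( \int_0^\infty \frac{\lambda^{2\sigma - 3/2}}{(\lambda + 1)^{2\sigma}} \, d\lambda \right) \|f\|_{L^2}^2,
\]
which is the desired estimate (in fact, an equality).

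There is no real obstacle here, only a sharpness check: the final scalar integral must be finite, which requires integrability at $0$ and at $\infty$. Near $\lambda = 0$ the integrand behaves like $\lambda^{2\sigma - 3/2}$, which is integrable exactly when $2\sigma - 3/2 > -1$, i.e., $\sigma > 1/4$, matching the hypothesis; near $\lambda = \infty$ the integrand decays like $\lambda^{-3/2}$, which is automatically integrable. Thus the hypothesis $\sigma > 1/4$ is precisely what makes both the Tonelli exchange and the final bound meaningful.
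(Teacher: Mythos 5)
Your proof is correct and follows essentially the same route as the paper: apply the spectral theorem to rewrite the squared norm as a double integral in $\lambda$ and the spectral variable $\mu$, exchange the order of integration, and factor out the $\mu$-dependence by the scaling $\lambda = \mu s$. You fill in the Tonelli step and the change of variables that the paper leaves implicit, and your observation that the bound is actually an equality (and that $\sigma > 1/4$ is precisely the convergence threshold at $\lambda \to 0$) is accurate.
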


\begin{proof}
Using the spectral measures $E_\mu$ for $\mathcal A$ (see \cite[Theorem VII.7]{RSI},
for instance),  we can write
\begin{align*}
&\| (\cdot)^{\sigma - 3/4} \mathcal A^{1/4} (\cdot + \mathcal A)^{-\sigma} f
\|_{L^2((0,\infty); L^2)}^2\\
&\leq \int_0^\infty
\int_0^\infty
\frac{\lambda^{2\sigma - 3/2} \mu^{1/2}}{(\lambda + \mu)^{2\sigma}}\,
d \| E_\mu(f)\|_{L^2}^2\, d\lambda\\
&= \int_0^\infty
\frac{\lambda^{2\sigma - 3/2}}{(\lambda + 1)^{2\sigma}}\, d\lambda
\thinspace \| f\|_{L^2}^2.
\end{align*}
\end{proof}
%]]]

%[[[ In the next Lemma is recalled the well-known result
In the next lemma, we recall the well-known result
that the function $t\to t^s$, $s\in[0,1]$,
is operator monotone on the set of bounded operators in a Hilbert space.
One can see \cite{L34} for the original  matrix-valued version of the statement,
\cite[Proposition 4.2.8]{KR97} for the case $s=1/2$ and \cite{P72}
for a short proof of the general case.
See also \cite{H51,K52}.
%]]]

%[[[ \begin{Lemma}[L\"owner-Heinz]
\begin{Lemma}[\cite{L34}, \cite{P72}, \cite{KR97}]
\label{Lemma:4.3}
Let $(\mathcal A_1,D(\mathcal A_1))$ and $(\mathcal A_2,D(\mathcal A_2))$
be two positive self-adjoint operators on $L^2$
satisfying $D(\mathcal A_2) \subset D(\mathcal A_1)$ and
	\[
	\langle f,
	\mathcal A_1 f \rangle
	\leq \langle f, \mathcal A_2 f \rangle.
	\]
Then
	\begin{align}
	\langle f, \mathcal A_1^{s} f\rangle
	\leq \langle f, \mathcal A_2^{s} f\rangle
	\label{eq:4.1}
	\end{align}
for $0 < s \leq 1.$ Moreover, if $\mathcal A_1$ is invertible, so is $\mathcal A_2$,
and
	\[
	\langle f, \mathcal A_2^{-1} f \rangle \leq \langle f, \mathcal A_1^{-1} f \rangle.
	\]
\end{Lemma}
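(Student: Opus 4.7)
The plan is to follow the short proof by Pedersen~\cite{P72}, reducing the statement for $0<s<1$ to the monotonicity of the resolvent via the integral representation of fractional powers already recorded in Definition~\ref{Definition:6} (cf.~the scalar identity \eqref{eq:3.2}). The case $s=1$ is nothing but the hypothesis, so it suffices to treat $0<s<1$ together with the inverse statement.

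\emph{First step: reduction via the integral representation.} For any non\-negative self-adjoint operator $\mathcal A$ and $0<s<1$, Definition~\ref{Definition:6} (with the exponent there taken equal to $s$, writing $t=\lambda$) yields
\[
\langle f,\mathcal A^{s}f\rangle
= \frac{\sin(s\pi)}{\pi}\int_0^\infty \lambda^{s-1}
\bigl\langle f,\mathcal A(\lambda+\mathcal A)^{-1}f\bigr\rangle\, d\lambda,
\]
for every $f$ in the form domain of $\mathcal A^{s/2}$. Using the identity $\mathcal A(\lambda+\mathcal A)^{-1}=I-\lambda(\lambda+\mathcal A)^{-1}$, the inequality \eqref{eq:4.1} will follow if we can show the pointwise-in-$\lambda$ inequality
\[
\bigl\langle f,(\lambda+\mathcal A_2)^{-1}f\bigr\rangle
\leq \bigl\langle f,(\lambda+\mathcal A_1)^{-1}f\bigr\rangle,
\qquad \lambda>0,
\]
and then integrate in $\lambda$ against the positive weight $\lambda^{s-1}\sin(s\pi)/\pi$. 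This is also exactly the inverse statement at the end of the lemma (with $\lambda\to 0^+$ when both operators are invertible), so a single argument will handle both conclusions.

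\emph{Second step: resolvent monotonicity.} For $\lambda>0$ the operators $\mathcal B:=\lambda+\mathcal A_1$ and $\mathcal C:=\lambda+\mathcal A_2$ are self-adjoint, bounded below by $\lambda$, with $D(\mathcal C)\subset D(\mathcal B)$ and $\langle f,\mathcal Bf\rangle\leq\langle f,\mathcal Cf\rangle$ by hypothesis. Their inverses $\mathcal B^{-1},\mathcal C^{-1}$ are then bounded positive operators with norm at most $1/\lambda$. The key computation is the standard sandwich identity: on the range of $\mathcal C^{1/2}$, one has
\[
\mathcal C^{-1/2}\mathcal B\mathcal C^{-1/2}\leq
\mathcal C^{-1/2}\mathcal C\mathcal C^{-1/2}=I,
\]
so $\mathcal C^{-1/2}\mathcal B\mathcal C^{-1/2}$ is a bounded positive contraction, hence invertible with inverse at least $I$:
\[
\mathcal C^{1/2}\mathcal B^{-1}\mathcal C^{1/2}\geq I.
\]
Sandwiching this with $\mathcal C^{-1/2}$ on both sides gives $\mathcal B^{-1}\geq \mathcal C^{-1}$, i.e.\ the required resolvent inequality. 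A small approximation ($\mathcal A_i\rightsquigarrow\mathcal A_i+\varepsilon I$ for the invertible case, or a truncation via the spectral projections of $\mathcal A_2$ to handle unboundedness) can be used to justify the manipulations rigorously.

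\emph{Third step: assembly.} Substituting the resolvent inequality into the integral representation of Step~1, and using Fubini/Tonelli (justified by the spectral theorem, which shows that the $\lambda$-integrand is non-negative and integrable for $f$ in the form domain of $\mathcal A_2^{s/2}\subset$ form domain of $\mathcal A_1^{s/2}$), we obtain \eqref{eq:4.1} for $0<s<1$. The case $s=1$ is the hypothesis. Finally, if $\mathcal A_1$ is invertible with bounded inverse, then $\mathcal A_2\geq\mathcal A_1\geq c>0$ in form sense implies $\mathcal A_2$ is also invertible, and the inverse inequality is exactly Step~2 with $\lambda=0$, i.e.\ the $s=-1$ instance of the same sandwich argument.

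The main conceptual obstacle is not analytic but domain-theoretic: one must be careful that the quadratic form inequality $\langle f,\mathcal A_1 f\rangle\leq\langle f,\mathcal A_2 f\rangle$ makes sense for all $f$ entering the integral representation, which is why the inclusion $D(\mathcal A_2)\subset D(\mathcal A_1)$ of the hypothesis (and, implicitly, the corresponding inclusion of fractional form domains) is needed. The operator-theoretic sandwich itself is elementary once framed on the bounded resolvents.
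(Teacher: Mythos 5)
The paper does not prove Lemma~\ref{Lemma:4.3}; it only cites \cite{L34}, \cite{P72}, \cite{KR97}. Your reconstruction follows Pedersen's approach \cite{P72}: reduce the power inequality via the integral representation \eqref{eq:3.1} (after subtracting, so that the $\|f\|^2$ terms cancel and the weight becomes $\lambda^s$, not $\lambda^{s-1}$ as you wrote), then establish resolvent monotonicity. That is the right route and matches the cited source.

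There is, however, a genuine logical gap in your Step~2. You write that $\mathcal C^{-1/2}\mathcal B\mathcal C^{-1/2}$ ``is a bounded positive contraction, \emph{hence invertible} with inverse at least~$I$.'' That implication is false: a bounded positive contraction need not be boundedly invertible (e.g.\ a diagonal operator with eigenvalues $1/k$). And in the present setting there is no reason for it to be: formally its inverse is $\mathcal C^{1/2}\mathcal B^{-1}\mathcal C^{1/2}$, whose boundedness would require $\mathrm{Ran}(\mathcal B^{-1/2})=D(\mathcal B^{1/2})\subset D(\mathcal C^{1/2})$, but the form inequality $\mathcal B\leq\mathcal C$ yields precisely the \emph{opposite} inclusion of form domains, $D(\mathcal C^{1/2})\subset D(\mathcal B^{1/2})$. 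The truncation you gesture at does not repair this, because the non-invertibility persists in the limit. The correct and standard conclusion from $\mathcal C^{-1/2}\mathcal B\mathcal C^{-1/2}\leq I$ is the norm statement $\|S\|\leq 1$ for $S:=\overline{\mathcal B^{1/2}\mathcal C^{-1/2}}$, since $S^*S=\mathcal C^{-1/2}\mathcal B\mathcal C^{-1/2}$; then $\|S^*\|=\|S\|\leq 1$ gives $\|SS^*\|\leq 1$, i.e.\ $\mathcal B^{1/2}\mathcal C^{-1}\mathcal B^{1/2}\leq I$, which is exactly $\mathcal C^{-1}\leq\mathcal B^{-1}$. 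No invertibility of the sandwiched operator is ever needed. With this replacement your Step~2 (and hence the inverse statement at $\lambda=0$ when $\mathcal A_1\geq c>0$) and the assembly in Step~3 go through as intended.
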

%]]]

%]]]

\medskip

%[[[ \subsubsection{Fractional Leibniz Rules}
\subsection{Fractional Leibniz Rules}
Here we collect some useful Leibniz rules
for fractional power of the classical Laplace operator.

%[[[ Lemma: Lenzman
\begin{Lemma}[{\cite[Proposition 4.1.A]{T91}}]\label{Lemma:4.4}
Let $f$ be a Lipschitz function.
Then for any $g \in H^1$
\[
\| [(-\Delta)^{1/2},f] g \|_{L^2}
\leq C \| f \|_{\mathrm{Lip}} \| g\|_{L^2}.
\]
\end{Lemma}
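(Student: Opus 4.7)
The strategy is to reduce the commutator to a sum of dyadic pieces via Bony's paraproduct decomposition, and then estimate each piece by a kernel/Young-inequality argument. Using the Littlewood--Paley projectors $P_j$ and the partial sums $S_j = Q + \sum_{\ell < j} P_\ell$ from the paper's notation, decompose
\[
fg = T_f g + T_g f + R(f,g), \qquad T_f g = \sum_{j\geq 1} S_{j-3} f \cdot P_j g,
\]
with $T_g f$ and $R(f,g)$ defined symmetrically and resonantly. Performing the same decomposition on $f \cdot (-\Delta)^{1/2} g$ and subtracting splits the commutator as $\mathrm{I} + \mathrm{II} + \mathrm{III}$, where the leading piece is $\mathrm{I} = \sum_j [(-\Delta)^{1/2}, S_{j-3} f] P_j g$.

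For $\mathrm{I}$, each summand $[(-\Delta)^{1/2}, S_{j-3} f] P_j g$ is Fourier-localized in a fixed enlargement of the annulus $|\xi| \sim 2^j$, since $S_{j-3} f$ lives in frequencies below $2^{j-3}$ while $P_j g$ sits at $|\xi| \sim 2^j$. Almost-orthogonality in $L^2$ then gives $\|\mathrm{I}\|_{L^2}^2 \lesssim \sum_j \|[(-\Delta)^{1/2}, S_{j-3} f] P_j g\|_{L^2}^2$. Each local commutator admits the kernel representation
\[
[(-\Delta)^{1/2}, S_{j-3} f] P_j g(x) = \int K_j(x-y) \bigl( S_{j-3} f(x) - S_{j-3} f(y) \bigr) P_j g(y)\, dy,
\]
where $K_j$ is the convolution kernel of $(-\Delta)^{1/2} \widetilde P_j$, with $\widetilde P_j$ a slight enlargement of $P_j$ satisfying $\widetilde P_j P_j = P_j$. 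By scaling and the Schwartz nature of the kernel of $(-\Delta)^{1/2}$ restricted to a single dyadic annulus, $|K_j(z)| \lesssim 2^{j(n+1)} (1 + 2^j |z|)^{-M}$ for every $M \geq 1$, hence $\||z| K_j(z)\|_{L^1(dz)} \lesssim 1$ uniformly in $j$. Combined with the Lipschitz bound $|S_{j-3} f(x) - S_{j-3} f(y)| \leq \|\nabla f\|_{L^\infty} |x-y|$, Young's convolution inequality then yields $\|[(-\Delta)^{1/2}, S_{j-3} f] P_j g\|_{L^2} \lesssim \|\nabla f\|_{L^\infty} \|P_j g\|_{L^2}$, and dyadic summation gives $\|\mathrm{I}\|_{L^2} \lesssim \|f\|_{\mathrm{Lip}} \|g\|_{L^2}$.

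The pieces $\mathrm{II}$ and $\mathrm{III}$ are handled by the same strategy together with the Bernstein-type bound $\|P_j f\|_{L^\infty} \lesssim 2^{-j} \|\nabla f\|_{L^\infty}$, which absorbs the factor $2^j \sim (-\Delta)^{1/2}$ on a frequency-$2^j$ block; the resonant remainder $\mathrm{III}$, which lacks direct frequency confinement of the output, is handled by inserting a further outer Littlewood--Paley decomposition of the product, but requires no new analytic ingredient. The principal technical obstacle is the uniform-in-$j$ kernel estimate on $K_j$ and the ensuing $L^1$ bound on the weighted kernel $|z| K_j(z)$; everything else is routine dyadic bookkeeping. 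An alternative self-contained route is to apply Lemma~\ref{Lemma:4.1} with $\mathcal{H}_{A,V} = -\Delta$ and $s = 1$, reducing matters to bounding $\int_0^\infty \lambda^{1/2} (\lambda - \Delta)^{-1} [-\Delta, f] (\lambda - \Delta)^{-1} d\lambda$ in operator norm; this reduction is borderline (naive spectral bounds yield a logarithmically divergent integrand) and ultimately demands splitting $\nabla f$ by frequency, bringing one back to essentially the same kernel analysis.
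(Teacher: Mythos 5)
The paper does not prove this lemma at all: Lemma~\ref{Lemma:4.4} is quoted verbatim from Taylor's book \cite[Proposition~4.1.A]{T91}, so there is no in-paper argument to compare against. What must be assessed is whether your blind reconstruction is actually a proof, and I do not think it is: the treatment of the pieces $\mathrm{II}$ and $\mathrm{III}$ has a genuine gap.

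Your handling of $\mathrm{I}=\sum_j[(-\Delta)^{1/2},S_{j-3}f]P_jg$ is correct. Each summand is Fourier-localized near $|\xi|\sim 2^j$, the kernel of $(-\Delta)^{1/2}\widetilde P_j$ is Schwartz with the stated scaling, $\||z|K_j\|_{L^1}\lesssim 1$, and the Lipschitz bound gives $\|[(-\Delta)^{1/2},S_{j-3}f]P_jg\|_{L^2}\lesssim\|\nabla f\|_{L^\infty}\|P_jg\|_{L^2}$. The crucial point is that the $g$-dependent factor is $\|P_jg\|_{L^2}$, which is $\ell^2$-summable in $j$, so almost orthogonality closes the estimate.

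That mechanism breaks for $\mathrm{II}=\sum_j\bigl[(-\Delta)^{1/2}(S_{j-3}g\cdot P_jf)-(-\Delta)^{1/2}S_{j-3}g\cdot P_jf\bigr]$. Each block is indeed localized near $|\xi|\sim 2^j$, and Bernstein plus $\|P_jf\|_{L^\infty}\lesssim 2^{-j}\|\nabla f\|_{L^\infty}$ gives the per-block bound
\[
\bigl\|(-\Delta)^{1/2}(S_{j-3}g\cdot P_jf)-(-\Delta)^{1/2}S_{j-3}g\cdot P_jf\bigr\|_{L^2}
\lesssim \|\nabla f\|_{L^\infty}\,\|S_{j-3}g\|_{L^2}
\lesssim \|\nabla f\|_{L^\infty}\,\|g\|_{L^2},
\]
but the $g$-factor is now $\|S_{j-3}g\|_{L^2}\leq\|g\|_{L^2}$, with \emph{no decay in $j$}. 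Almost orthogonality then only yields
\[
\|\mathrm{II}\|_{L^2}^2\lesssim\sum_j\|\mathrm{II}_j\|_{L^2}^2\lesssim\|\nabla f\|_{L^\infty}^2\|g\|_{L^2}^2\sum_j 1,
\]
which diverges. The same issue affects $\mathrm{III}$: the resonant blocks are bounded by $\|\nabla f\|_{L^\infty}\|P_kg\|_{L^2}$ but with Fourier support spread over a ball of radius $\sim 2^k$ rather than an annulus, so one is led to $\sum_k\|P_kg\|_{L^2}$ (a $B^0_{2,1}$ norm), not $\|g\|_{L^2}$. Your statement that $\mathrm{II}$ and $\mathrm{III}$ ``require no new analytic ingredient'' is precisely where the proof fails.

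This is not a bookkeeping slip that can be patched by a sharper constant: the symbol of $\mathrm{II}$ is $|\xi|-|\xi-\eta|\sim|\eta|$ in the high-$f$/low-$g$ régime, so $\mathrm{II}\approx(-\Delta)^{1/2}f\cdot g$ up to lower order, and $(-\Delta)^{1/2}f$ for Lipschitz $f$ lies in BMO but not in $L^\infty$. Controlling such a term by $\|g\|_{L^2}$ requires a Carleson-measure / $T(1)$ type ingredient (boundedness of the paraproduct with a BMO symbol) or, equivalently, the Coifman--Meyer bilinear multiplier theorem, which is the genuine content of Calder\'on's first commutator theorem. Taylor's proof goes through his paradifferential calculus for the symbol classes $BS^m_{1,1}$ and is essentially this machinery in disguise. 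Your alternative route via Lemma~\ref{Lemma:4.1} with $s=1$ you yourself note is borderline divergent, and resolving that divergence leads back to the same cancellation that the direct Bernstein argument misses. To make the proposal correct you would need, at minimum, to isolate the paraproduct $\pi\bigl((-\Delta)^{1/2}f,\,g\bigr)$, prove (or cite) its $L^2$ boundedness for BMO symbols via a Carleson-measure estimate, and show that the remaining parts of $\mathrm{II}+\mathrm{III}$ carry genuine $j$-decay.
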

%]]]

%[[[ Lemma : Leibniz rule
\begin{Lemma}[{\cite[Lemma A.12]{KPV93}}]
\label{Lemma:4.5}
For $0 < s < 1$, there exists $C > 0$ such that
\begin{align*}
&\| (-\Delta)^{s/2}(fg)
- f (-\Delta)^{s/2} g
- g (-\Delta)^{s/2} f
\|_{L^2} \leq C  \| f\|_{L^\infty}
\| (-\Delta)^{s/2} g\|_{L^2}.
\end{align*}
\end{Lemma}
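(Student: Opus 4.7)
The estimate is a fractional Leibniz (Kato--Ponce type) commutator identity for $(-\Delta)^{s/2}$ with $0<s<1$. Observe that the right-hand side places $f$ only in $L^\infty$ while $g$ is measured in $\dot H^s$; this asymmetry rules out bounding each of the three terms on the LHS separately (indeed $\|g(-\Delta)^{s/2}f\|_{L^2}$ would require an $L^\infty$ norm on $g$), so the proof must extract cancellation between them.

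My strategy relies on the pointwise Riesz integral representation, valid for $0<s<1$ and $h\in\mathcal S$,
\[
(-\Delta)^{s/2}h(x)=c_{n,s}\,\mathrm{p.{\,}v.}\int_{\mathbb R^n}\frac{h(x)-h(y)}{|x-y|^{n+s}}\,dy.
\]
First I would reduce to $f,g\in\mathcal S$ by density and apply this representation to $h=fg$, to $h=g$ (multiplied pointwise by $f(x)$), and to $h=f$ (multiplied by $g(x)$). The elementary identity
\[
f(x)g(x)-f(y)g(y)-f(x)\bigl(g(x)-g(y)\bigr)-g(x)\bigl(f(x)-f(y)\bigr)=-\bigl(f(x)-f(y)\bigr)\bigl(g(x)-g(y)\bigr)
\]
then produces the clean pointwise formula
\[
(-\Delta)^{s/2}(fg)(x)-f(x)(-\Delta)^{s/2}g(x)-g(x)(-\Delta)^{s/2}f(x)=-c_{n,s}\int\frac{(f(x)-f(y))(g(x)-g(y))}{|x-y|^{n+s}}\,dy.
\]
The integrand on the right is absolutely convergent near $y=x$ despite the individual principal-value singularities, precisely because of the product structure.

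Second, inserting $|f(x)-f(y)|\le 2\|f\|_{L^\infty}$ reduces the lemma to the scalar-in-$f$ estimate
\[
\Bigl\|\int\frac{|g(\cdot)-g(y)|}{|\cdot-y|^{n+s}}\,dy\Bigr\|_{L^2}\lesssim \|(-\Delta)^{s/2}g\|_{L^2}.
\]
I would establish this by Littlewood--Paley decomposition $g=\sum_j g_j$ with $g_j=P_jg$: for each block, split the $y$-integral at the intrinsic length scale $|x-y|\sim 2^{-j}$. On $\{|x-y|\le 2^{-j}\}$ use the mean-value estimate $|g_j(x)-g_j(y)|\lesssim |x-y|\,M(\nabla g_j)(x)$ together with Bernstein's inequality $\|\nabla g_j\|_{L^2}\sim 2^j\|g_j\|_{L^2}$. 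On $\{|x-y|>2^{-j}\}$ use $|g_j(x)-g_j(y)|\le|g_j(x)|+|g_j(y)|$, controlling the first piece by $2^{-js}|g_j(x)|$ and the second by a Riesz convolution with $|g_j|$ handled by the Hardy--Littlewood maximal theorem. Summation over $j$ via Minkowski and Plancherel then delivers the bound with the desired constant $\|(-\Delta)^{s/2}g\|_{L^2}$.

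The main obstacle is precisely this last step. A direct Cauchy--Schwarz using the Gagliardo--Slobodeckij characterization $\|(-\Delta)^{s/2}g\|_{L^2}^2\sim\int\!\int|g(x)-g(y)|^2|x-y|^{-n-2s}\,dx\,dy$ fails because the complementary weight $|x-y|^{-n}$ is non-integrable on $\mathbb R^n$. Frequency localization is therefore essential to replace that divergent uniform weight with the scale-adapted one $2^{-j}$ on each dyadic block, which is what allows the final bound to carry only the $L^\infty$ norm of $f$.
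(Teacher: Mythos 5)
The paper does not contain a proof of this lemma; it is quoted directly from Kenig--Ponce--Vega \cite[Lemma A.12]{KPV93}, so there is no internal argument to compare against. Your opening move is sound: the pointwise bilinear identity
\[
(-\Delta)^{s/2}(fg)(x)-f(x)(-\Delta)^{s/2}g(x)-g(x)(-\Delta)^{s/2}f(x)=-c_{n,s}\int\frac{\bigl(f(x)-f(y)\bigr)\bigl(g(x)-g(y)\bigr)}{|x-y|^{n+s}}\,dy
\]
is correct for $0<s<1$, and it does encode the cancellation among the three terms (the principal value disappears because the product of difference quotients is $O(|x-y|^2)$ near the diagonal for smooth $f,g$).

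The gap is in the second step, when you discard all structure in $f$ via $|f(x)-f(y)|\le 2\|f\|_{L^\infty}$ and reduce to the sublinear estimate
\[
\Bigl\| \int \frac{|g(\cdot)-g(y)|}{|\cdot-y|^{n+s}}\,dy\Bigr\|_{L^2} \lesssim \|(-\Delta)^{s/2} g\|_{L^2}.
\]
Your own Littlewood--Paley computation shows why this cannot close: the sharp per-block bound is $\|D^s(P_jg)\|_{L^2}\lesssim 2^{js}\|P_jg\|_{L^2}$, exactly as you derive, but the outputs $D^s(P_jg)$ are nonnegative and carry no frequency localization whatsoever (the absolute value inside the $y$-integral destroys the Fourier support of $P_jg$). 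Plancherel-type almost-orthogonality is therefore unavailable, and Minkowski's inequality gives only $\sum_j 2^{js}\|P_jg\|_{L^2}=\|g\|_{\dot B^s_{2,1}}$, the $\ell^1$-summed Besov norm. Since $\dot B^s_{2,1}\subsetneq\dot H^s=\dot B^s_{2,2}$ strictly, this route proves a genuinely weaker Besov version of the lemma, not the claimed Sobolev one; indeed the reduced estimate above should be expected to fail for general $g\in\dot H^s$. The missing ingredient is precisely the smallness of $f(x)-f(y)$ at scales where $f$ oscillates slowly relative to $g$. On the Fourier side the bilinear symbol $|\zeta+\eta|^s-|\zeta|^s-|\eta|^s$ satisfies $|\sigma(\zeta,\eta)|\lesssim\min(|\zeta|,|\eta|)^s$, and the gain when $|\zeta|\ll|\eta|$ is exactly what turns the $\ell^1$ dyadic sum into an $\ell^2$ one. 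A correct proof along your lines must therefore decompose \emph{both} $f$ and $g$, set up a paraproduct, and track the interaction of the two difference quotients at each pair of scales, rather than bounding the $f$-factor uniformly at the outset.
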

%]]]

%[[[ Remark : Dong Li
\begin{Remark}
In \cite{L16}, one can find the refined estimate
\begin{align*}
&\| (-\Delta)^{s/2}(fg)
- f (-\Delta)^{s/2} g
- g (-\Delta)^{s/2} f
\|_{L^2} \lesssim  \| f\|_{\mathrm{BMO}}
\| (-\Delta)^{s/2} g\|_{L^2}.
\end{align*}
and more general estimates,
but for simplicity, we use only Lemma \ref{Lemma:4.5}.
\end{Remark}
%]]]

%[[[ In the sequel, we shall also need a generalization, obtained in \cite{GO14},
In the sequel, we shall also need a generalization, obtained in \cite{GO14},
of the classical Kato-Ponce estimate,
introduced in the seminal and well celebrated work \cite{KP88}. We recall it.
%]]]

%[[[ Lemma : Grafakos Oh
\begin{Lemma}[{\cite[Theorem 1]{GO14}}]
\label{Lemma:4.6}
Let $1/2 < r < \infty$, $1 < p_1,p_2,q_1,q_2 \leq \infty$ satisfying
	\[
	\frac{1}{r}
	= \frac{1}{p_1} + \frac{1}{q_1}
	= \frac{1}{p_2} +
	\frac{1}{q_2}.
	\]
For $s > \max\{0,n/r-n\}$ or $s \in 2 \mathbb N$ (the set of positive even integers), there exists $C > 0$ such that
	\begin{align*}
	&\| (-\Delta)^{s/2}(fg)\|_{L^r}\\
	&\leq C \| (-\Delta)^{s/2} f\|_{L^{p_1}} \|g\|_{L^{q_1}}
	+ C \| f\|_{L^{p_2}} \| (-\Delta)^{s/2} g\|_{L^{q_2}}.
	\end{align*}
\end{Lemma}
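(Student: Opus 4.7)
The plan is to establish the fractional Leibniz rule by means of Bony's paraproduct decomposition. Writing $f_{<j} := Qf + \sum_{0 < \ell < j} P_\ell f$ for the low-frequency truncation at scale $2^j$, we decompose
\begin{align*}
fg = \underbrace{\sum_{j \geq 0} f_{<j-3} \cdot P_j g}_{=: \pi_1(f,g)} + \underbrace{\sum_{j \geq 0} P_j f \cdot g_{<j-3}}_{=: \pi_2(f,g)} + \underbrace{\sum_{|j-k| \leq 2} P_j f \cdot P_k g}_{=: \pi_3(f,g)}.
\end{align*}

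I would first handle the paraproducts $\pi_1$ and $\pi_2$, which are the straightforward pieces. Each summand in $\pi_1$ has Fourier support contained in the annulus $|\xi| \sim 2^j$, so $(-\Delta)^{s/2}(f_{<j-3}\cdot P_j g)$ differs from $2^{sj}\, f_{<j-3}\cdot P_j g$ only by a smooth Fourier multiplier uniformly bounded on $L^r$ for $1/2 < r < \infty$. Combining the Littlewood-Paley square-function characterization of $L^r$ (valid for $r > 1/2$ through the Fefferman-Stein vector-valued maximal inequality and its Hardy-space extension), the pointwise bound $|f_{<j-3}| \lesssim \mathcal{M} f$, and Hölder's inequality with exponents $p_2$ and $q_2$, one obtains
\begin{align*}
\|(-\Delta)^{s/2} \pi_1(f,g)\|_{L^r} \lesssim \|f\|_{L^{p_2}} \|(-\Delta)^{s/2} g\|_{L^{q_2}},
\end{align*}
and the bound for $\pi_2$, producing the missing $\|(-\Delta)^{s/2} f\|_{L^{p_1}} \|g\|_{L^{q_1}}$ term, is entirely symmetric.

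The main obstacle is the resonant remainder $\pi_3$, because each piece $P_j f \cdot P_k g$ with $|j-k| \leq 2$ has Fourier support in the \emph{ball} $|\xi| \lesssim 2^{j+C}$ rather than in an annulus, so the Littlewood-Paley trick no longer supplies a free factor $2^{sj}$. Instead, after projecting with $P_\ell$ only scales $j \gtrsim \ell - C$ contribute, and acting with $(-\Delta)^{s/2}$ produces the factor $2^{s\ell}$, leaving a double sum weighted by $2^{s(\ell-j)}$. For $r \geq 1$ and $s > 0$ the geometric series in $j-\ell$ converges and combining with the square-function characterization and Hölder yields the claim. For $r \in (1/2,1)$ one must invoke a Bernstein inequality on the ball $|\xi| \lesssim 2^\ell$, which costs an extra factor $2^{n\ell(1/r - 1)}$; convergence of the resulting geometric series in $j-\ell$ then forces exactly the hypothesis $s > n/r - n$, which is the sharp threshold identified in \cite{GO14}.

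Finally, for the case $s \in 2\mathbb{N}$, I would bypass the paraproduct analysis altogether: $(-\Delta)^{s/2}$ is then a differential operator of integer order $s$, and the classical Leibniz rule expands $(-\Delta)^{s/2}(fg)$ as a finite sum of products $\partial^\alpha f \cdot \partial^\beta g$ with $|\alpha| + |\beta| = s$. One then concludes by Hölder together with the Gagliardo-Nirenberg interpolation of $\|\partial^\alpha f\|_{L^{p}}$ between $\|f\|_{L^{p_2}}$ and $\|(-\Delta)^{s/2} f\|_{L^{p_1}}$, and similarly for $g$. This case covers values of $s$ beyond the threshold $s > n/r - n$ that the paraproduct approach cannot reach when $r < 1$.
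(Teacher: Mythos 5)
The paper offers no proof of Lemma \ref{Lemma:4.6} at all: it is quoted verbatim from Grafakos--Oh \cite{GO14} and used as a black box. Your sketch is therefore being compared against the published literature rather than against anything in the paper itself. With that said, the strategy you outline is essentially the one in \cite{GO14} (and in the parallel treatments by Muscalu--Schlag, Bourgain--Li, D.~Li, etc.): Bony decomposition into $\pi_1,\pi_2,\pi_3$; off-resonant paraproducts handled by the annular frequency localization, a multiplier/square-function argument, and the Fefferman--Stein vector-valued maximal inequality; the resonant piece $\pi_3$ identified as the source of the threshold, because its Fourier support sits in a ball rather than an annulus. Your diagnosis that the extra Bernstein factor $2^{n\ell(1/r-1)}$ for $r<1$ is precisely what produces the condition $s>n/r-n$ is the correct heart of the matter, and the separate argument for $s\in 2\mathbb N$ via the classical Leibniz rule together with Gagliardo--Nirenberg interpolation is also the standard route (and explains why $s\in 2\mathbb{N}$ appears as an alternative hypothesis, bypassing the paraproduct threshold for small $r$).

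Two points deserve slightly more care than you give them, even at the level of a sketch. First, for $1/2<r<1$ the Littlewood--Paley square function characterizes the Hardy space $H^r$, not $L^r$; you invoke this correctly in the direction you need (since $H^r\hookrightarrow L^r$, controlling the square function bounds the $L^r$ norm from above), but you should say so explicitly rather than wave at ``the Hardy-space extension,'' because the direction of the embedding is exactly what makes the argument legitimate. Second, when you apply H\"older inside $\pi_1$ you commit to the exponent pair $(p_2,q_2)$ and inside $\pi_2$ to $(p_1,q_1)$; the resonant piece $\pi_3$ may then be closed with either pair, but it is worth checking that the Bernstein/Gagliardo--Nirenberg step for $\pi_3$ does not accidentally require both simultaneously in a way that conflicts with the hypothesis $1<p_i,q_i\le\infty$. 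These are fixable and are addressed in \cite{GO14}; your outline is otherwise sound.
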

%]]]
%]]]

\medskip

%[[[ \subsection{Key estimate for  \ref{Proposition:2}}
\subsection{Key estimate for  Proposition \ref{Proposition:2}}
%[[[ The purpose of this section is to show that we
The purpose of this subsection is to show that
the commutator between $\mathcal D_{A,V}$ and a localized weight function
is realized as a bounded operator in $L^2$
under the following assumptions:
	\begin{align}
	&\|(\lambda  + \mathcal H_{A,V} )^{-\sigma} f\|_{L^2}
	\lesssim \|(\lambda -\Delta)^{-\sigma} f\|_{L^2},
	\quad \forall f \in L^2,\, \lambda > 0
	\label{eq:4.2}
	\end{align}
for some $1/4 < \sigma \leq 1;$
	\begin{align}
	\| \mathcal D_{A,V} f \|_{L^2}
	&\lesssim \| (-\Delta)^{1/2} f \|_{L^2},
	\quad \forall f \in H^1;
	\label{eq:4.3}\\
	\| (-\Delta)^{1/4} f \|_{L^2}
	&\lesssim \|
	\mathcal H_{A,V}^{1/4} f \|_{L^2},
	\quad \forall f \in H^{1/2};
	\label{eq:4.4}
	\end{align}
for any $g,h \in H^{1/2}$
	\begin{equation}\label{eq:4.5}
	\begin{aligned}
	\langle g,
	\nabla (A (\nabla f) h) \rangle
	&\lesssim
	\|(-\Delta)^{1/4} \nabla f\|_{L^\infty}
	\| (-\Delta)^{1/4} g\|_{L^2} \| h \|_{L^2}\\
	&+\| \nabla f\|_{L^\infty}
	\| (-\Delta)^{1/4} g\|_{L^2} \| (-\Delta)^{1/4} h \|_{L^2}.
	\end{aligned}
	\end{equation}
%]]]

%[[[ Lemma: Key
\begin{Lemma}
\label{Lemma:4.7}
Assume {\rm A1}, {\rm A2}, {\rm A3}, {\rm H1}, and {\rm H2}.
Let $A$ and $V$ satisfy the properties
\eqref{eq:4.2}, \eqref{eq:4.3}, \eqref{eq:4.4}, and \eqref{eq:4.5}.
Then for any $ j \in \mathbb Z$,
	\begin{align}
	\|[\mathcal D_{A,V}, P_j f ] P_{\leq j} h \|_{L^2}
	&\lesssim 2^j \|P_j f\|_{L^\infty} \|h\|_{L^2},
	\label{eq:4.6}\\
	\left| \langle P_{>
	j} g , [\mathcal D_{A,V}, P_j f ] P_{ > j} h \rangle \right|
	&\lesssim 2^j \|P_j f\|_{L^\infty} \|g\|_{L^2} \|h\|_{L^2}.
	\label{eq:4.7}
	\end{align}
\end{Lemma}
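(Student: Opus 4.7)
The plan is to establish both \eqref{eq:4.6} and \eqref{eq:4.7} by duality, reducing each to a bilinear estimate of the form $|\langle G, [\mathcal D_{A,V}, P_j f] H\rangle| \lesssim 2^j\|P_j f\|_{L^\infty}\|G\|_{L^2}\|H\|_{L^2}$, where $G$ and $H$ carry the appropriate Littlewood--Paley localizations (for \eqref{eq:4.6}, take $H = P_{\leq j} h$ and $G$ arbitrary; for \eqref{eq:4.7}, take $G = P_{>j} g$, $H = P_{>j} h$). The starting point is Lemma \ref{Lemma:4.1} with $s=1$, which rewrites
\[
\langle G, [\mathcal D_{A,V}, P_j f] H\rangle = -\frac{1}{\pi}\int_0^\infty \lambda^{1/2}\,\langle U_\lambda, [\mathcal H_{A,V}, P_j f] V_\lambda\rangle\,d\lambda,
\]
with $U_\lambda := (\lambda+\mathcal H_{A,V})^{-1} G$ and $V_\lambda := (\lambda+\mathcal H_{A,V})^{-1} H$. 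Lemma \ref{Lemma:2.2} identifies the inner commutator $[\mathcal H_{A,V}, P_j f]$ as a first-order divergence-form operator whose coefficients depend linearly on $A\nabla P_j f$, and its skew-adjointness lets me transfer a derivative between the two variables by integration by parts.

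Applying assumption \eqref{eq:4.5} to $\langle U_\lambda, [\mathcal H_{A,V}, P_j f] V_\lambda\rangle$ in either variable ordering (chosen advantageously), together with the Bernstein estimates $\|\nabla P_j f\|_{L^\infty}\sim 2^j\|P_j f\|_{L^\infty}$ and $\|(-\Delta)^{1/4}\nabla P_j f\|_{L^\infty}\sim 2^{3j/2}\|P_j f\|_{L^\infty}$, produces the pointwise-in-$\lambda$ bound
\[
|\langle U_\lambda, [\mathcal H_{A,V}, P_j f] V_\lambda\rangle| \lesssim 2^j\|P_j f\|_{L^\infty}\|(-\Delta)^{1/4} U_\lambda\|_{L^2}\|(-\Delta)^{1/4} V_\lambda\|_{L^2} + 2^{3j/2}\|P_j f\|_{L^\infty}\,\mathcal R_\lambda,
\]
with $\mathcal R_\lambda$ either $\|U_\lambda\|_{L^2}\|(-\Delta)^{1/4} V_\lambda\|_{L^2}$ or $\|(-\Delta)^{1/4} U_\lambda\|_{L^2}\|V_\lambda\|_{L^2}$. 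The ``diagonal'' first term is straightforward: Cauchy--Schwarz in $\lambda$ combined with \eqref{eq:4.4} and Lemma \ref{Lemma:4.2} applied with $\sigma=1$ bounds $\int_0^\infty\lambda^{1/2}\|(-\Delta)^{1/4}U_\lambda\|_{L^2}^2\,d\lambda$ by $\|G\|_{L^2}^2$ and likewise for $V_\lambda$, yielding the desired scaling $2^j\|P_j f\|_{L^\infty}\|G\|_{L^2}\|H\|_{L^2}$.

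The ``off-diagonal'' second term is the main obstacle: its Bernstein prefactor $2^{3j/2}$ exceeds the target by a factor $2^{j/2}$, which must be recovered from the Littlewood--Paley restriction on $H$ (for \eqref{eq:4.6}), or on both $G$ and $H$ (for \eqref{eq:4.7}). Placing $(-\Delta)^{1/4}$ on the localized side and splitting the $\lambda$-integral at the natural scale $\lambda_\ast := 2^{2j}$ associated with the frequency of $P_j f$ is the key move. On the tail $\lambda > \lambda_\ast$, the spectral bounds $\|(\lambda+\mathcal H_{A,V})^{-1}\|_{L^2\to L^2}\leq \lambda^{-1}$ and $\|\mathcal H_{A,V}^{1/4}(\lambda+\mathcal H_{A,V})^{-1}\|_{L^2\to L^2}\lesssim \lambda^{-3/4}$ (used with \eqref{eq:4.4}) give an absolutely integrable tail with $\int_{\lambda_\ast}^\infty\lambda^{-5/4}\,d\lambda\sim 2^{-j/2}$, independently of the localization. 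On the bulk $\lambda\leq \lambda_\ast$, assumption \eqref{eq:4.2} compares the rough resolvent $(\lambda+\mathcal H_{A,V})^{-\sigma}$ with the classical one $(\lambda-\Delta)^{-\sigma}$; the Fourier support of $P_{\leq j} h$ (respectively $P_{>j} h$) then yields the missing $2^{-j/2}$ through an explicit kernel computation on the Laplacian resolvent. Combining both regimes closes \eqref{eq:4.6}, and \eqref{eq:4.7} follows from the same blueprint with $P_{>j}$ providing the complementary high-frequency localization. The technically delicate step is precisely this last one: transferring the classical Littlewood--Paley frequency localization through the rough-coefficient resolvent via the comparison \eqref{eq:4.2} without loss in the constants.
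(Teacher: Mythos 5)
Your treatment of \eqref{eq:4.7} is essentially sound and close to the paper's: both of you pass through Lemma \ref{Lemma:4.1}, apply \eqref{eq:4.5} to the inner commutator, and recover the excess $2^{j/2}$ from the high-frequency restriction $|\xi|\geq 2^j$ on $P_{>j}h$ (and $P_{>j}g$) via the Laplacian resolvent and \eqref{eq:4.2}. Your split of the $\lambda$-integral at $\lambda_* = 2^{2j}$ is a valid alternative, though unnecessary: the paper handles the full $\lambda$-range in one stroke because, once the frequency gain $|\xi|\geq 2^j$ is inserted, the full integral $\int_0^{\infty}\lambda^{2\sigma-3/2}(\lambda+|\xi|^2)^{-2\sigma}\,d\lambda\lesssim|\xi|^{-1}$ already converges. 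One small inaccuracy: since $[\mathcal H_{A,V},P_jf]=\nabla\cdot A(\nabla P_jf)+(\nabla P_jf)\cdot A\nabla$ is the sum of a first-order divergence-form operator and its (negative) adjoint, applying \eqref{eq:4.5} and its transpose produces \emph{both} off-diagonal terms $\|(-\Delta)^{1/4}U_\lambda\|\|V_\lambda\|$ and $\|U_\lambda\|\|(-\Delta)^{1/4}V_\lambda\|$; you cannot choose one advantageously. For \eqref{eq:4.7} this is harmless because both $g$ and $h$ carry the $P_{>j}$ localization.

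The genuine gap is in your proof of \eqref{eq:4.6}. You propose to run the same Lemma \ref{Lemma:4.1} machinery with $G$ arbitrary and $H=P_{\leq j}h$, claiming the missing $2^{-j/2}$ is recovered from the Fourier support of $P_{\leq j}h$ through a kernel computation on $(\lambda-\Delta)^{-\sigma}$. That computation goes the wrong way: passing \eqref{eq:4.2} into the $\lambda$-integral gives a factor of the form
\[
\int_0^{\infty}\lambda^{2\sigma-3/2}\big\|(\lambda-\Delta)^{-\sigma}P_{\leq j}h\big\|_{L^2}^2\,d\lambda
\;\sim\;\int_{|\xi|\lesssim 2^j}\frac{|\hat h(\xi)|^2}{|\xi|}\,d\xi,
\]
and the \emph{low}-frequency restriction $|\xi|\lesssim 2^j$ makes $|\xi|^{-1}$ \emph{large}, not small; this quantity is not $\lesssim 2^{-j}\|h\|^2_{L^2}$ and can even diverge if $\hat h$ concentrates near the origin. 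More structurally, $\int_0^{\lambda_*}\lambda^{1/2}\|(\lambda+\mathcal H_{A,V})^{-1}P_{\leq j}h\|_{L^2}^2\,d\lambda$ need not be finite, because $0$ lies in the spectrum of $\mathcal H_{A,V}$ and $P_{\leq j}h$ can have spectral mass arbitrarily close to $0$. Your tail estimate $\int_{\lambda_*}^\infty\lambda^{-5/4}\,d\lambda\sim 2^{-j/2}$ is fine, but the bulk cannot be closed by frequency localization of $h$ alone. The paper avoids the issue entirely by proving \eqref{eq:4.6} with a completely different, more elementary mechanism: the Leibniz splitting
\[
[\mathcal D_{A,V},P_jf]
=\mathcal D_{A,V}(-\Delta)^{-1/2}\big[(-\Delta)^{1/2},P_jf\big]
+\big[\mathcal D_{A,V}(-\Delta)^{-1/2},P_jf\big](-\Delta)^{1/2},
\]
where the first term is controlled by the $L^2$-boundedness of $\mathcal D_{A,V}(-\Delta)^{-1/2}$ (from \eqref{eq:4.3}) together with Lemma \ref{Lemma:4.4} and $\|\nabla P_jf\|_{L^\infty}\lesssim 2^j\|P_jf\|_{L^\infty}$, and the second term uses only the universal bound $\|[T,M_{P_jf}]\|_{L^2\to L^2}\leq 2\|T\|\,\|P_jf\|_{L^\infty}$ for the bounded operator $T=\mathcal D_{A,V}(-\Delta)^{-1/2}$ together with Bernstein, $\|(-\Delta)^{1/2}P_{\leq j}h\|_{L^2}\lesssim 2^j\|h\|_{L^2}$. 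In other words, the low-frequency localization of $h$ in \eqref{eq:4.6} is there to control a \emph{derivative} of $P_{\leq j}h$, not a resolvent, and a unified duality argument based on Lemma \ref{Lemma:4.1} does not capture this.
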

%]]]

%[[[ proof
\begin{proof}
%[[[  We first prove \eqref{eq:4.6}.
We first prove \eqref{eq:4.6}.
The same relation given at the beginning of the proof of Lemma \ref{Lemma:2.2}
and the triangular inequality gives
	\begin{equation}\label{eq:4.8}
	\begin{aligned}
	&\|
	[\mathcal D_{A,V}, P_j f] P_{\leq j} h \|_{L^2}\\
	&\leq
	\| \mathcal D_{A,V} (-\Delta)^{-1/2} [ (-\Delta)^{1/2}, P_j f] P_{\leq j} h \|_{L^2}\\
	&+
	\| [\mathcal D_{A,V}(-\Delta)^{-1/2}, P_j f](-\Delta)^{1/2} P_{\leq j} h \|_{L^2}.
	\end{aligned}
	\end{equation}
By \eqref{eq:4.3} and Lemma \ref{Lemma:4.4},
the first term on the R.H.S. of \eqref{eq:4.8} is estimated as
	\begin{align*}
	&\| \mathcal D_{A,V} (-\Delta)^{-1/2}
	[ (-\Delta)^{1/2}, P_j f] P_{\leq j} h \|_{L^2}\\
	&\leq \|  [ (-\Delta)^{1/2}, P_j f] P_{\leq j} h \|_{L^2}
	\lesssim 2^j \| P_j f \|_{L^\infty} \| P_{\leq j} h \|_{L^2},
	\end{align*}
where we have used the fact that
	\[
	\| \nabla P_j f \|_{L^\infty}
	\lesssim 2^{j} \| P_j f \|_{L^\infty}.
	\]
By \eqref{eq:4.3}, the second term on the R.H.S. of \eqref{eq:4.8} is estimated as
	\begin{equation*}
	\begin{aligned}
	&\| [\mathcal D_{A,V} (-\Delta)^{-1/2}, P_j f](-\Delta)^{1/2} P_{\leq j} h \|_{L^2}\\
	&\lesssim \| P_j f \|_{L^\infty} \|(-\Delta)^{1/2} P_{\leq j} h
	\|_{L^2}\lesssim 2^{j+1} \| P_j f \|_{L^\infty} \| h \|_{L^2}.
	\end{aligned}
	\end{equation*}
%]]]

%[[[ We next prove \eqrerf{eq:4.7}.
\noindent We next prove \eqref{eq:4.7}.
By Lemma \ref{Lemma:4.1}, it is enough to show
	\begin{align}\label{eq:4.9}
	&\left|
	\int_0^{\infty} \lambda^{1/2}
	\big \langle (\lambda + \mathcal H_{A,V})^{-1} P_{> j} g,
	\nabla \cdot A (\nabla P_j f)(\lambda + \mathcal H_{A,V})^{-1}
	P_{> j}h \big \rangle\,
	d\lambda \right|
	\nonumber\\
	&\lesssim 2^j \|P_j f\|_{L^\infty} \|g\|_{L^2} \|h\|_{L^2}.
	\end{align}
By \eqref{eq:4.4} and \eqref{eq:4.5},
the L.H.S. of \eqref{eq:4.9} is estimated by
	\begin{align*}
	&2^j \|P_j f\|_{L^\infty} \|g\|_{L^2} \|h\|_{L^2}\\
	&\lesssim 2^{3j/2} \| A \|_{L^\infty} \|P_j f\|_{L^\infty}\\
	&\times \int_0^{\infty} \lambda^{1/2}
	\|\mathcal H_{A,V}^{1/4} (\lambda + \mathcal H_{A,V})^{-1} P_{> j} g \|_{L^2}
	\|(\lambda + \mathcal H_{A,V})^{-1} P_{> j} h \|_{L^2} \,d\lambda\\
	&+ 2^{j} \| A \|_{W^{1,\infty}} \|P_j f\|_{L^\infty}\\
	& \times \int_0^{\infty} \lambda^{1/2}
	\|\mathcal H_{A,V}^{1/4}(\lambda + \mathcal H_{A,V})^{-1} P_{> j} g \|_{L^2}
	\|\mathcal H_{A,V}^{1/4} (\lambda + \mathcal H_{A,V})^{-1} P_{> j} h \|_{L^2}
	\,d\lambda.
	\end{align*}
Then,
by Lemma \ref{Lemma:4.2},
the first integral on the R.H.S. of the last inequality is estimated by
	\begin{align*}
	&2^{3j/2}\int_0^{\infty} \lambda^{1/2}
	\|\mathcal H_{A,V}^{1/4}
	(\lambda + \mathcal H_{A,V})^{-1} P_{> j} g \|_{L^2}
	\|(\lambda + \mathcal H_{A,V})^{-1} P_{> j} h \|_{L^2} \,d\lambda\\
	&\lesssim 2^{3j/2}
	 \int_0^{\infty}
	\lambda^{\sigma - 1/2}
	\|\mathcal H_{A,V}^{1/4} (\lambda + \mathcal H_{A,V})^{-1} P_{> j} g \|_{L^2}
	\|(\lambda - \Delta )^{-\sigma} P_{> j} h \|_{L^2}
	\,d\lambda\\
	&\lesssim 2^{j}
	\| (\cdot)^{1/4} \mathcal H_{A,V}^{1/4} (\cdot + \mathcal H_{A,V})^{-1} P_{> j} g
	\|_{L^2(0,\infty;L^2)}\\
	&\times
	\|(\cdot)^{\sigma - 3/4} (-\Delta)^{1/4} (\cdot -\Delta)^{-\sigma} P_{> j} h
	\|_{L^2(0,\infty;L^2)}\\
	&\lesssim 2^{j} \| g\|_{L^2} \| h\|_{L^2}
	\end{align*}
with $1/4 < \sigma \leq 1$ satisfying \eqref{eq:4.2}.
The second integral is also estimated by
	\begin{align*}
	&
	2^{j} \int_0^{\infty} \lambda^{1/2}
	\|\mathcal H_{A,V}^{1/4}(\lambda + \mathcal H_{A,V})^{-1} P_{> j} g \|_{L^2}
	\|\mathcal H_{A,V}^{1/4} (\lambda + \mathcal H_{A,V})^{-1} P_{> j} h \|_{L^2}
	\,d\lambda\\
	&\lesssim
	2^{j}
	\|
	(\cdot)^{1/4} \mathcal H_{A,V}^{1/4} (\cdot + \mathcal H_{A,V})^{-1} P_{> j} g
	\|_{L^2(0, \infty;L^2)}\\
	&\times \| (\cdot)^{1/4} \mathcal H_{A,V}^{1/4}
	(\cdot + \mathcal H_{A,V})^{-1} P_{> j} h \|_{L^2(0,\infty;L^2)}\\
	&\lesssim 2^{j} \| g\|_{L^2} \| h\|_{L^2}.
	\end{align*}
\end{proof}
%]]]
%]]]

%]]]

%[[[ \subsection{Proof of Proposition \ref{Proposition:2}}
\subsection{Proof of Proposition \ref{Proposition:2}}
We are now in a position to prove Proposition \ref{Proposition:2}.
We treat separately the two cases.
%[[[ \subsubsection{Proof of Case 1.}
\begin{proof}[Proof of Case 1.]
%[[[ At first, we show
At first, we show that Lemma \ref{Lemma:4.7} implies \eqref{eq:1.7} with $V\equiv0$.
\eqref{eq:4.2}, \eqref{eq:4.3}, and \eqref{eq:4.4} follow from A1.
Indeed, \eqref{eq:1.5} implies
	\[
	C_1 \| \nabla f \|_{L^2}^2
	\leq \langle f, \mathcal H_{A,0} f \rangle
	= \| \mathcal D_{A,0} f \|_{L^2}^2
	\leq C_2 \| \nabla f
	\|_{L^2}^2
	\]
which coincides with \eqref{eq:4.3}.
Therefore, Lemma \ref{Lemma:4.3} can be applied with $\mathcal H_{A,0}$ and $-\Delta$.
Hence, the relation \eqref{eq:4.1}, with $s= 1/2$,
$\mathcal A_1 = \mathcal H_{A,0}$ and $\mathcal A_2 = - C \Delta$,
coincides with \eqref{eq:4.4}.
Moreover \eqref{eq:1.5} implies that one can find two constants $c, C$ with  $0 < c \leq 1 \leq C$ such that
	\[
	c\,\langle f , (\lambda -\Delta) f\rangle
	\leq \langle f , (\lambda  + \mathcal H_{A,0} ) f\rangle
	\leq C \langle f , (\lambda - \Delta ) f\rangle
	\]
for any $f \in H^2$ and $\lambda \geq 0$.
Then, Lemma \ref{Lemma:4.3} implies that for any $f \in
L^2$
	\[
	\langle f, (\lambda  + \mathcal H_{A,0} )^{-1} f\rangle
	\leq \langle f, c^{-1} (\lambda -\Delta)^{-1} f\rangle,
	\]
which coincides with \eqref{eq:4.2} with $\sigma = 1/2$.
%]]]

%[[[ \eqref{eq:4.5} may be obtained by decomposing
\eqref{eq:4.5} may be obtained by decomposing
$\partial_j (A_{j,k} (\partial_k f) h )$ as follows:
	\begin{align}\label{eq:4.10}
	\partial_j (A_{j,k} (\partial_k f) h)
	& = (-\Delta)^{1/4} R_j (-\Delta)^{1/4} (A_{j,k} (\partial_k f) h)
	\nonumber\\
	&= (-\Delta)^{1/4} R_j
	A_{j,k} (-\Delta)^{1/4} ((\partial_k f) h)
	\nonumber\\
	&+ (-\Delta)^{1/4} R_j ((-\Delta)^{1/4} A_{j,k})(\partial_k f) h
	\nonumber\\
	&+ (-\Delta)^{1/4} R_j B(A_{j,k}, (\partial_k f)h),
	\end{align}
where
	\[
	\mathfrak F (R_j f) = \frac{\xi_j}{|\xi|} \hat f(\xi)
	\]
is, up to a complex constant, the standard Riesz transform, and
	\[
	B(A_{j,k},\partial_k f)
	:= (-\Delta)^{1/4} (A_{j,k}
	\partial_k f)
	- A_{j,k} (-\Delta)^{1/4} \partial_k f
	- \partial_k f (-\Delta)^{1/4} A_{j,k}.
	\]
The first term on the R.H.S. of \eqref{eq:4.10} is easily estimated
by the H\"older inequality and Lemma \ref{Lemma:4.6}.
Here we recall that \eqref{eq:1.5} implies $\|A_{j,k}\|_{L^\infty} < \infty$.
The other terms are estimated similarly,
since by Lemma \ref{Lemma:4.5} and \eqref{eq:1.6},
we have
	\begin{equation*}
	\| B(A_{j,k}, (\partial_k f)h) \|_{L^2}
	\lesssim \| A \|_{L^\infty}
	\| (-\Delta)^{1/4} ((\partial_k f) h) \|_{L^2}
	\end{equation*}
and
	\begin{equation*}
	\| ((-\Delta)^{1/4} A_{j,k})(\partial_k f) h \|_{L^2}
	\lesssim \|
	(-\Delta)^{1/4} ((\partial_k f) h) \|_{L^2},
	\end{equation*}
respectively. 
%]]]

%[[[ We now show that Lemma \ref{Lemma:4.7} implies
We now show Proposition \ref{Proposition:2} with $V\equiv0$.
Since
	\begin{align*}
	&\langle g, [\mathcal D_{A,0},f] h \rangle\\
	&= \langle P_{\leq j}g,
	[\mathcal D_{A,0}, f] h \rangle
	+ \langle g, [\mathcal D_{A,0}, f] P_{\leq j}h \rangle
	+ \langle P_{> j} g, [\mathcal D_{A,0},f]
	P_{> j}h \rangle\\
	&= - \overline{\langle h , [\mathcal D_{A,0}, f] P_{\leq j}g \rangle}
	+ \langle g, [\mathcal D_{A,0}, f] P_{\leq j}h \rangle
	+ \langle P_{> j} g, [\mathcal D_{A,0},f] P_{> j}h \rangle,
	\end{align*}
Lemma \ref{Lemma:4.2} implies the estimate.
%]]]

%[[[ We next show \eqref{eq:1.7} with $V \neq 0$.
We next show \eqref{eq:1.7} with $V \nequiv 0$.
\eqref{eq:1.7} follows from the fact that
for any $g \in H^1$,
	\[
	\| (\mathcal D_{A,0} - \mathcal D_{A,V}) g \|_{L^2}
	\leq \| V \|_{L^{q,\infty}} \| g \|_{L^2}.
	\]
Indeed, by
Lemma \ref{Lemma:4.1},
	\begin{align*}
	&C_0(1/2)^{-1}(\mathcal D_{A,0} - \mathcal D_{A,V}) g\\
	&=  \left(\int_0^\infty \lambda^{1/2}
	((\lambda + \mathcal H_{A,0})^{-1} - (\lambda + \mathcal H_{A,V})^{-1})\,
	d \lambda\right) g\\
	&=\left( \int_0^1 \lambda^{1/2}
	((\lambda + \mathcal H_{A,0})^{-1}
	- (\lambda + \mathcal H_{A,V})^{-1}) \,d \lambda\right) g\\
	&+\left(\int_1^\infty \lambda^{1/2}
	(\lambda + \mathcal H_{A,V})^{-1} V
	(\lambda + \mathcal H_{A,0})^{-1} \,d \lambda\right) g.
	\end{align*}
The $L^2$-norm of the first integral on the R.H.S. of the last equality
is shown to be bounded
by the fact that for any non-negative self-adjoint operator $\mathcal A$
	\[
	\| (\lambda + \mathcal A)^{-1} g\|_{L^2}
	\leq \lambda^{-1} \| g\|_{L^2}.
	\]
By \eqref{eq:1.5} and Lemma \ref{Lemma:4.3},
	\begin{align*}
	\| V (\lambda + \mathcal H_{A,0})^{-1} g \|_{L^2}
	&\lesssim \|
	V\|_{L^{q,\infty}}
	\|(-\Delta)^{n/2q}(\lambda + \mathcal H_{A,0})^{-1} g \|_{L^2}\\
	&\lesssim \| V\|_{L^{q,\infty}}
	\|\mathcal H_{A,0}^{n/2q}(\lambda + \mathcal H_{A,0})^{-1} g \|_{L^2}\\
	&\lesssim \| V\|_{L^{q,\infty}} \lambda^{-1+n/2q} \| g \|_{L^2}.
	\end{align*}
Then, the $L^2$-norm of the second integral is shown to be bounded by
	\[
	\int_1^\infty \lambda^{-3/2 + n/2q} \,d \lambda<\infty.
	\]
%]]]
\end{proof}
%]]]

%[[[ \subsubsection{Proof for Case 2.}
\begin{proof}[Proof of Case 2.]
\eqref{eq:1.8} follows if we are able to show that
	\begin{align}
	-\Delta \sim - \Delta_{A,V},
	\label{eq:4.11}
	\end{align}
where the equivalence is in the sense of bilinear forms.
Indeed, if \eqref{eq:4.11} is shown, then \eqref{eq:4.2},
\eqref{eq:4.3}, and \eqref{eq:4.4} are satisfied
and therefore Lemma \ref{Lemma:4.7} implies \eqref{eq:1.8}.
The relation \eqref{eq:4.11} is proved as follows:
	\begin{align*}
	\langle f, -\Delta_{A,V} f\rangle
	&\geq (1 - \theta) \langle A \nabla f, \nabla f \rangle\\ % I changed here () with \langle \rangle
	&\geq C_1 ( 1 - \theta)  \| \nabla f \|_{L^2}^2,\\
	\langle f, -\Delta_{A,V} f\rangle
	&\leq C_2 \| \nabla f \|_{L^2}^2 + \| |V|^{1/2} f\|_{L^2}^2\\
	&\leq C_2 \| \nabla f \|_{L^2}^2
	+ C^2 \| |V|^{1/2}\|_{L^{n,\infty}}^2 \| (-\Delta)^{1/2}f\|_{L^2}^2.
	\end{align*}
\end{proof}
%]]]

%]]]
%]]]

%[[[ \section{Blowup}
\section{The finite time blow-up result}

%[[[ \subsection{Blow-up solutions for an ODE}

Theorem \ref{Theorem:4} may be  concluded  be means of the following ODE argument.

%[[[ Lemma : ODE
\begin{Lemma}
\label{Lemma:5.1}
Let $A, B > 0$ and $q>1$.
If $f \in C^1([0,T); \mathbb R^+)$ satisfies $f(0) > 0$ and
	\[
	f' + A f
	= B f^q \quad \mbox{on $[0,T)$ for some $T>0$},
	\]
then
	\[
	f(t)
	= e^{-A t}
	\bigg( f(0)^{-(q-1)} + A^{-1}
	B e^{-A(q-1)t}
	- A^{-1} B \bigg)^{-\frac{1}{q-1}}.
	\]
Moreover, if $f(0) > A^{\frac 1 {q-1}} B^{- \frac 1 {q-1}}$,
then $T < - \frac 1 {A (q-1)} \log ( 1 - A B^{-1} f(0)^{-q+1} )$.
\end{Lemma}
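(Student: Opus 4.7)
The plan is to recognize the equation as a Bernoulli-type ODE and reduce it to a linear first-order ODE via the standard substitution. Specifically, I would set $g(t) := f(t)^{-(q-1)} = f(t)^{1-q}$, which is well-defined and $C^1$ on $[0,T)$ since $f>0$. Differentiating gives $g' = (1-q) f^{-q} f'$, so multiplying the equation $f' + A f = B f^q$ by $(1-q) f^{-q}$ yields the linear ODE
\begin{equation*}
g' - A(q-1) g = - B(q-1).
\end{equation*}

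Next, I would solve this linear equation by the integrating factor $e^{-A(q-1)t}$:
\begin{equation*}
\bigl(e^{-A(q-1)t} g(t)\bigr)' = -B(q-1)\, e^{-A(q-1)t},
\end{equation*}
and integrate from $0$ to $t$. This produces
\begin{equation*}
g(t) = e^{A(q-1)t}\Bigl(g(0) - \tfrac{B}{A}\Bigr) + \tfrac{B}{A}
      = e^{A(q-1)t}\Bigl(f(0)^{-(q-1)} + A^{-1} B e^{-A(q-1)t} - A^{-1} B\Bigr).
\end{equation*}
Inverting via $f = g^{-1/(q-1)}$ and pulling the $e^{A(q-1)t}$ factor outside yields exactly the formula in the statement.

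For the blow-up bound, I would observe that $f(t) \to \infty$ precisely when $g(t) \downarrow 0$. Setting the bracket equal to zero gives
\begin{equation*}
e^{-A(q-1)t_*} = 1 - A B^{-1} f(0)^{-(q-1)}.
\end{equation*}
Under the hypothesis $f(0) > A^{1/(q-1)} B^{-1/(q-1)}$, the right-hand side lies in $(0,1)$, so $t_* = -\frac{1}{A(q-1)} \log\bigl(1 - A B^{-1} f(0)^{-(q-1)}\bigr) \in (0,\infty)$. Since $g$ is continuous, strictly decreasing to $0$ on $[0,t_*)$, and $f$ is assumed to exist on $[0,T)$ as a positive $C^1$ function, necessarily $T \leq t_*$, giving the claimed strict inequality after noting that the explicit formula forces $f(t) \to \infty$ as $t \uparrow t_*$.

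There is essentially no obstacle here: the only thing to be mildly careful about is that the substitution $g = f^{1-q}$ requires $f>0$, which is guaranteed by the assumption $f \in C^1([0,T);\mathbb{R}^+)$, and that the integrating-factor computation is valid for $t \in [0,T)$ where $g$ stays positive. The argument is a textbook manipulation, and the only subtle point is distinguishing the strict inequality $T < t_*$ (rather than equality), which follows because a $C^1$ solution cannot be continued up to the time at which the explicit formula diverges.
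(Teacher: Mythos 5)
Your proof is correct and yields exactly the formula in the statement, but you take a slightly different route than the paper. You use the classical Bernoulli substitution $g=f^{1-q}$ to convert the ODE into a \emph{linear} first-order equation, which you then solve by an integrating factor. The paper instead first substitutes $f=e^{-At}g$ to strip off the damping term, leaving the autonomous-in-$g$ separable equation $g'=Be^{-A(q-1)t}g^q$, which it integrates by separation of variables. Both are two-line manipulations that land on the same closed form; yours has the mild advantage of reducing to a genuinely linear equation, while the paper's substitution keeps the structure $f = e^{-At}\cdot(\text{bracket})^{-1/(q-1)}$ visible from the outset, which is literally the shape of the answer.

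One small caveat that applies equally to your argument and to the paper's: the explicit formula shows that $f$ is a finite positive $C^1$ function on all of $[0,t_*)$ with $t_* = -\frac{1}{A(q-1)}\log\bigl(1-AB^{-1}f(0)^{-(q-1)}\bigr)$, and blows up only as $t\uparrow t_*$. So what actually follows is $T\le t_*$, not the strict inequality stated in the lemma (indeed $T=t_*$ is attained by the maximal solution). Your sentence ``giving the claimed strict inequality'' does not really justify strictness, and neither does the paper's ``the conclusion follows straightforward''; this is a harmless imprecision in the lemma as stated, since for the application in Theorem~\ref{Theorem:4} only finiteness of $T$ is needed.
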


\begin{proof}
For completeness, we sketch the proof.
Let $f = e^{-A t} g$.
Then
	\[
	g' = B e^{-A(q-1)t} g^q
	\]
and therefore,
	\[
	\frac{1}{1-q} \bigg( g^{1-q}(t) - g^{1-q}(0) \bigg)
	= \frac{B}{A(1-q)} ( e^{-A(q-1)t} - 1).
	\]
The conclusion follows straightforward.
\end{proof}
%]]]

%]]]

%[[[ We exploit now the proof of  Theorem \ref{Theorem:4}.
We exploit Lemma \ref{Lemma:5.1} in the proof of  Theorem \ref{Theorem:4}.
\begin{proof}[Proof of Theorem  \ref{Theorem:4}.]
%[[[ \noindent \emph{Case 1.}
\noindent \emph{Case 1.}
Let $w \in B_{\infty,1}^1(\mathbb R)$ be a non-negative function
satisfying $1/w \in L^{\infty} \cap L^2$.
We put $u = vw$.
Then $v$ satisfies
	\begin{align}
	\partial_t v + \frac{i}{w} [\mathcal D_{A,V}, w] v
	= w^{p-1} |v|^{p-1} v.
	\label{eq:5.1}
	\end{align}
By multiplying $\overline v$ on the both hand sides of \eqref{eq:5.1},
integrating the resulting equation, and taking the real part,
	\begin{align}
	&\frac{1}{2} \frac{d}{dt} \| v(t)\|^2_{L^2}
	\nonumber \\
	&\geq \| w^{\frac{p-1}{p+1}} v(t) \|_{L^{p+1}}^{p+1}
	- \| 1/w \|_{L^\infty} \| [\mathcal D_{A,V}, w] v \|_{L^2}
	\| v\|_{L^2}
	\nonumber \\
	&\geq \| 1/w \|_{L^{2}}^{-p+1} \| v\|_{L^2}^{p+1}
	- \| 1/w \|_{L^\infty} \| [\mathcal D_{A,V}, w] v \|_{L^2}
	\| v\|_{L^2}
	\nonumber \\
	&\geq \| 1/w \|_{L^{2}}^{-p+1} \| v\|_{L^2}^{p+1}
	- C\| 1/w\|_{L^\infty}\|w\|_{B^1_{\infty,1}}\| v\|_{L^2}^2,
	\label{eq:5.2}
	\end{align}
where we have used that
	\[
	\| v \|_{L^2}
	\leq
	\| 1/w^{\frac{p-1}{p+1}} \|_{L^{\frac{2(p+1)}{p-1}}}
	\| w^{\frac{p-1}{p+1}} v(t) \|_{L^{p+1}}
	\leq
	\| 1/w \|_{L^{2}}^{\frac{p-1}{p+1}}
	\|
	w^{\frac{p-1}{p+1}} v(t) \|_{L^{p+1}}.
	\]
By \eqref{eq:5.2}, we apply Lemma \ref{Lemma:5.1} with
	\begin{align*}
	A &= C \| 1/w \|_{L^\infty} \| w \|_{B_{\infty,1}^1},\\
	B &= \| 1/w \|_{L^{2}}^{-p+1}.
	\end{align*}
Then \eqref{eq:1.9} implies that $\|v(t)\|_{L^2}$ is not uniformly controlled.
%]]]

%[[[ \subsubsection{Case 2.}
\noindent\emph{Case 2.}
We rescale $w \in \dot B_{\infty,1}^1$ as $w_R = w(\cdot / R)$ with $R > 0$.
Then by \eqref{eq:5.2},
	\begin{align*}
	&\frac{1}{2} \frac{d}{dt} \| v(t)\|_{L^2}^2\\
	&\geq \| 1/w_R
	\|_{L^{2}}^{-p+1} \| v\|_{L^2}^{p+1}
	- \| 1 /w_R \|_{L^\infty} \| [(-\Delta_{A,V})^{1/2}, w_R] v \|_{L^2}
	\| v\|_{L^2}\\
	&\geq R^{-(p-1)/2} \| 1/w
	\|_{L^{2}}^{-p+1} \| v\|_{L^2}^{p+1}
	- C R^{-1} \| 1/w \|_{L^\infty} \| w \|_{\dot B_{\infty,1}^1} \| v\|_{L^2}^2.
	\end{align*}
We apply Lemma
\ref{Lemma:5.1} with
	\begin{align*}
	A &= C R^{-1} \| 1/w \|_{L^\infty} \| w \|_{B_{\infty,1}^1},\\
	B &= R^{-(p-1)/2} \| 1/w \|_{L^{2}}^{-p+1},
	\end{align*}
which means $AB^{-1} \sim R^{-1+(p-1)/2}$.
Therefore, if $1<p < 3$, $A B^{-1} \to 0$ as
$R \to \infty$ and this shows Theorem \ref{Theorem:4}.
%]]]
\end{proof}
%]]]
%]]]

%[[[ \section{Appendix}
\appendix
%[[[ \section{Equivalence of Sobolev norms}
\section{Equivalence of Sobolev norms}
\label{section:A}
%[[[ We show of the equivalence of the standard $H^s$-norms
We show the equivalence of the standard $H^s$-norms
(for $s=1,2$) and the ones induced by the Hamiltonian $\mathcal H_{A,V}$.
We begin with simple a priori estimates that imply the equivalence of $H^1$ norms.
\begin{Lemma}
\label{Lemma:A.1}
Assume $\mathrm{H2}$.
If $V \in L^{q,\infty}(\mathbb{R}^n)+ L^{\infty}(\mathbb{R}^n)$
with $q > \max\{1, n/2\}$,
then for any  $\alpha \in (0,1)$ there exists $C>0$, so that
for any $f \in C_c^{\infty}(\mathbb R^n)$,
	\begin{equation}\label{eq:A.1}
	\langle ( - \alpha  \nabla \cdot A \nabla -  |V| ) f, f \rangle
	\geq -C \|f\|_{L^2}^2
	\end{equation}
and
	\begin{equation}\label{eq:A.2}
	\langle A \nabla f, \nabla f \rangle
	+ \langle  V f, f \rangle
	+ \|f\|_{L^2}^2
	\sim \|f\|_{H^1}^2.
	\end{equation}
\end{Lemma}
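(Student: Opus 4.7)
The plan is to derive both inequalities from a single Kato-type absorption estimate: for every $\varepsilon>0$ there exists $C_\varepsilon>0$ with
\[
\int_{\mathbb R^n}|V|\,|f|^2\,dx \leq \varepsilon \|\nabla f\|_{L^2}^2 + C_\varepsilon \|f\|_{L^2}^2
\qquad \forall\, f \in C_c^\infty(\mathbb R^n).
\]
Once this is in hand, \eqref{eq:A.1} is immediate: ellipticity A1 gives $\alpha\langle A\nabla f, \nabla f\rangle \geq \alpha C_1 \|\nabla f\|_{L^2}^2$, and the absorption bound yields
\[
\langle(-\alpha\nabla\cdot A\nabla - |V|)f,f\rangle \geq (\alpha C_1 - \varepsilon)\|\nabla f\|_{L^2}^2 - C_\varepsilon \|f\|_{L^2}^2.
\]
Choosing $\varepsilon < \alpha C_1$ lets us discard the non-negative gradient term and arrive at \eqref{eq:A.1}.

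To prove the absorption estimate, I would decompose $V = V_1 + V_2$ with $V_1 \in L^{q,\infty}$ and $V_2 \in L^\infty$. The $V_2$ contribution is at most $\|V_2\|_{L^\infty}\|f\|_{L^2}^2$. For $V_1$, H\"older's inequality in Lorentz spaces gives
\[
\int |V_1||f|^2\,dx \leq \|V_1\|_{L^{q,\infty}}\,\bigl\||f|^2\bigr\|_{L^{q',1}} = \|V_1\|_{L^{q,\infty}} \|f\|_{L^{2q',2}}^2.
\]
The hypothesis $q > \max\{1, n/2\}$ places the Sobolev index $s := n/(2q)$ in $(0,1)$ and satisfies the scaling relation $1/(2q') = 1/2 - s/n$, so the refined Sobolev embedding $\dot H^s \hookrightarrow L^{2q',2}$ applies for every $n\geq 1$. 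Combining with the Gagliardo--Nirenberg interpolation $\|f\|_{\dot H^s}^2 \leq \|f\|_{L^2}^{2(1-s)}\|\nabla f\|_{L^2}^{2s}$ and Young's inequality with exponents $1/s$ and $1/(1-s)$ converts this into the desired absorption bound with $\varepsilon$ arbitrarily small.

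The equivalence \eqref{eq:A.2} then follows from the same ingredients. The upper bound uses $\langle A\nabla f, \nabla f\rangle \leq C_2\|\nabla f\|_{L^2}^2$ together with the absorption estimate applied to $\langle Vf, f\rangle$ (with, say, $\varepsilon = 1$), yielding a control by $\|f\|_{H^1}^2$. For the lower bound, hypothesis H2 rewrites as $\langle Vf, f\rangle \geq -\theta\langle A\nabla f, \nabla f\rangle$ with $\theta \in (0,1)$, so
\[
\langle A\nabla f, \nabla f\rangle + \langle Vf, f\rangle \geq (1-\theta)\langle A\nabla f, \nabla f\rangle \geq (1-\theta)C_1\|\nabla f\|_{L^2}^2,
\]
and adding $\|f\|_{L^2}^2$ on both sides recovers the full $H^1$ norm.

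The only delicate point in the plan is the Lorentz-space Sobolev embedding $\dot H^{n/(2q)} \hookrightarrow L^{2q',2}$: the refined target $L^{2q',2}$ (rather than $L^{2q'}$) is exactly what is needed on the Lorentz dual side to make H\"older's inequality close. The strict bound $q > n/2$ is what keeps $s$ below the critical exponent $1$ and allows the subsequent Gagliardo--Nirenberg interpolation to absorb the gradient into an arbitrarily small constant; the endpoint $q = n/2$ is genuinely excluded by the scheme.
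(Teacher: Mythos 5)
Your proof is correct and follows essentially the same route as the paper: O'Neil's (Lorentz) H\"older inequality to bound $\int|V||f|^2$, the Lorentz-refined Sobolev embedding $\dot H^{n/(2q)}\hookrightarrow L^{2q',2}$, and Gagliardo--Nirenberg plus Young to absorb the gradient term; the paper just phrases the H\"older step on $|V|^{1/2}|f|$ in $L^2$ rather than on $|V_1||f|^2$ in $L^1$, which is an equivalent formulation. You also make explicit the decomposition $V=V_1+V_2$ and the final Young step, both of which the paper leaves implicit.
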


\begin{proof}
We know that uniform ellipticity assumption implies
$$ \langle \left( -   \nabla \cdot A \nabla\right) f, f \rangle \sim \| f \|_{\dot{H}^1}.$$
We need to prove the inequality
	\begin{equation}\label{eq:A.3}
	\int_{\mathbb{R}^n} |V| |f|^2dx  \lesssim \|f \|^2_{\dot{H}^s}
	\end{equation}
with $0 < s < 1$,
since this estimate and  the Gagliardo-Nirenberg interpolation inequality
	\[
	\| f \|_{\dot{H}^s} \lesssim \| f \|^s_{\dot{H}^1} \| f \|^{1-s}_{L^2}
	\]
imply
	\[
	\int_{\mathbb{R}^n} |V| |f|^2 dx
	\leq \langle ( - \alpha  \nabla \cdot A \nabla ) f, f \rangle + C \|f\|_{L^2}^2,
	 \]
so we have \eqref{eq:A.1} and \eqref{eq:A.2}.

In order to prove \eqref{eq:A.3}, we take
	\[
	\frac{1}{r} = \frac{1}{2} - \frac{1}{2q}, \ s= \frac{n}{2q}
	\]
and then we can write
\[
\left( \int_{\mathbb{R}^n} |V| |f|^2dx \right)^{1/2}
\lesssim \| |V|^{1/2}\|_{L^{2q,\infty}} \|f\|_{L^{r,2}}
\lesssim \| V \|_{L^{q,\infty}}^{1/2} \|f \|^2_{\dot{H}^s}
\]
due to H\"older inequality in Lorentz spaces and Sobolev embedding. The requirement $0 < s < 1$ is fulfilled due to the assumption $q>n/2.$
\end{proof}
%]]]

%[[[ Lemma : Equivalence For the Remark %ref{Remark:1.1}%, we have the following estimate:
\begin{Lemma}
\label{Lemma:A.2}
Assume $\mathrm{A1}$, $\mathrm{A2}$, $\mathrm{H1}$, and $\mathrm{H2}$.
Then one can find positive constants $C_1 < C_2$ so that for any $f \in C_c^\infty(\mathbb R^n)$,
	\begin{align}
	C_1 \| f \|_{H^2}
	&\leq \| \mathcal - \Delta_{A,V} f \|_{L^2} + \| f \|_{L^2}
	\leq C_2 \| f \|_{H^2}.
	\label{eq:A.4}
	\end{align}
\end{Lemma}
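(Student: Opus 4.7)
The plan is to establish the two inequalities in \eqref{eq:A.4} separately: the upper bound algebraically, and the lower bound as an elliptic $H^{2}$-regularity estimate for $-\Delta_{A,0}$ which is then perturbed by the potential.

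For the upper bound, I would expand
\[
-\Delta_{A,V}f = -\sum_{j,k}(\partial_{j}A_{j,k})(\partial_{k}f) - \sum_{j,k}A_{j,k}\partial_{j}\partial_{k}f + Vf,
\]
and control the first two terms by $\|f\|_{H^{2}}$ using $A_{j,k},\,\partial_{j}A_{j,k}\in L^{\infty}$ coming from A1 and A2. For $Vf$, I would decompose $V=V_{1}+V_{2}$ with $V_{1}\in L^{q,\infty}$ and $V_{2}\in L^{\infty}$ according to H1; then $\|V_{2}f\|_{L^{2}}\leq \|V_{2}\|_{L^{\infty}}\|f\|_{L^{2}}$, while H\"older's inequality in Lorentz spaces together with the Sobolev embedding $\dot{H}^{n/q}\hookrightarrow L^{2q/(q-2),2}$ yields $\|V_{1}f\|_{L^{2}}\lesssim \|V_{1}\|_{L^{q,\infty}}\|f\|_{\dot{H}^{n/q}}$. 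Since $q>n/2$ forces $n/q<2$, this is absorbed into $\|f\|_{H^{2}}$.

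For the lower bound, the crucial step is the coefficient-only estimate
\[
\|f\|_{H^{2}}\lesssim \|{-}\Delta_{A,0}f\|_{L^{2}}+\|f\|_{L^{2}} \qquad (\star).
\]
To prove $(\star)$, I would set $g=-\partial_{j}(A_{j,k}\partial_{k}f)$, pair with $\overline{\Delta f}$, and integrate by parts twice (first in $\partial_{j}$, then in $\partial_{l}$) to derive the identity
\[
\int A_{j,k}(\partial_{l}\partial_{k}f)\overline{\partial_{l}\partial_{j}f}\,dx = -\int g\,\overline{\Delta f}\,dx -\int (\partial_{l}A_{j,k})(\partial_{k}f)\overline{\partial_{l}\partial_{j}f}\,dx.
\]
Applying the uniform ellipticity \eqref{eq:1.5} to the vector $\eta_{k}=\partial_{l}\partial_{k}f(x)$ pointwise and summing over $l$ yields $\int A_{j,k}(\partial_{l}\partial_{k}f)\overline{\partial_{l}\partial_{j}f}\,dx \geq C_{1}\|\nabla^{2}f\|_{L^{2}}^{2}$. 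Combined with Cauchy--Schwarz on the right, the Lipschitz bound $\|\nabla A\|_{L^{\infty}}<\infty$ from A2, and the interpolation $\|\nabla f\|_{L^{2}}\leq \varepsilon\|\nabla^{2}f\|_{L^{2}}+C_{\varepsilon}\|f\|_{L^{2}}$, one absorbs $\|\nabla^{2}f\|_{L^{2}}$ and concludes $(\star)$.

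To finish, I would write $-\Delta_{A,0}f=-\Delta_{A,V}f-Vf$ and reuse the upper-bound analysis to get $\|Vf\|_{L^{2}}\lesssim \|f\|_{H^{n/q}}+\|f\|_{L^{2}}$; the interpolation $\|f\|_{H^{n/q}}\leq \varepsilon\|f\|_{H^{2}}+C_{\varepsilon}\|f\|_{L^{2}}$ then permits one last absorption to produce $\|f\|_{H^{2}}\lesssim \|{-}\Delta_{A,V}f\|_{L^{2}}+\|f\|_{L^{2}}$. The main obstacle is deriving the integration-by-parts identity underlying $(\star)$: extracting control of the full Hessian $\nabla^{2}f$ rather than only the single combination $\Delta f$ requires using ellipticity on the mixed second derivatives and carefully handling the $\nabla A$-error term, which is only $L^{\infty}$ under A2 and therefore has to be absorbed via an interpolation rather than a higher-regularity estimate.
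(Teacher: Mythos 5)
Your proof is correct, and the lower bound is handled by a genuinely different argument than the paper's. Both proofs treat the upper bound the same way (expand $-\Delta_{A,V}f$, use $A,\nabla A\in L^\infty$ from A1--A2, and H\"older in Lorentz spaces plus Sobolev embedding for the $Vf$ term). For the lower bound, the paper conjugates by $(-\Delta)^{1/2}$: it applies the ellipticity of $A$ to $g=(-\Delta)^{1/2}f$, which produces a commutator $[(-\Delta_{A,0}),(-\Delta)^{1/2}]$, and controls that commutator via Taylor's estimate $\|[(-\Delta)^{1/2},A]\nabla f\|_{L^2}\lesssim\|\nabla A\|_{L^\infty}\|\nabla f\|_{L^2}$ (Lemma~\ref{Lemma:4.4}) before absorbing by Young's inequality. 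You instead derive the classical G\aa{}rding-type $H^2$-regularity estimate by pairing $-\Delta_{A,0}f$ with $\overline{\Delta f}$, integrating by parts twice, applying ellipticity pointwise to the vectors $\eta_k=\partial_l\partial_k f$ to extract $\|\nabla^2 f\|_{L^2}^2$, and absorbing the $(\nabla A)\cdot\nabla f$ error via the interpolation $\|\nabla f\|_{L^2}\le\varepsilon\|\nabla^2 f\|_{L^2}+C_\varepsilon\|f\|_{L^2}$; your identity and sign bookkeeping check out, and the use of Hermiticity of $A$ to justify the ellipticity in the swapped-index form is legitimate. Your route is more elementary and self-contained (no fractional powers, no commutator lemma), which is the standard way to prove interior $H^2$ elliptic regularity; the paper's route is slightly shorter and is consistent with the commutator machinery it builds elsewhere, but relies on Lemma~\ref{Lemma:4.4}. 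Both correctly finish by writing $-\Delta_{A,0}f=-\Delta_{A,V}f-Vf$ and absorbing $\|Vf\|_{L^2}\lesssim\|f\|_{H^{n/q}}$ with $n/q<2$ via interpolation.
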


\begin{proof}
%[[[ The right inequality of \eqref{eq:A.4} follows directly from
The right inequality of \eqref{eq:A.4} follows directly from
the representation of $\Delta_{A,V}$.
Indeed
	\begin{align*}
	\Delta_{A,V} f
	&= ( \nabla A ) \cdot \nabla f + \sum_{j,k=1}^n A_{j,k}(x) \partial_j
	\partial_k f
	+ V f.
	\end{align*}
Further we can take
$$ \frac{1}{r} = \frac{1}{2} - \frac{1}{q}, \ s= \frac{n}{q}$$
and then we can write
	\begin{align}
	\|Vf\|_{L^2}
	\lesssim \|V\|_{L^{q,\infty}} \|f\|_{L^{r,2}}
	\lesssim  \|V\|_{L^{q,\infty}}\|f \|_{H^s}
	\label{eq:A.5}
	\end{align}
with $ s \in (0,2)$
so interpolation yields the right-side estimate.
%]]]

%[[[ In order to show the left inequality of \eqref{eq:A.4},
Next we show the left inequality of \eqref{eq:A.4} with $V=0$.
By $\mathrm{A1}$,
	\begin{align*}
	&C_1 \| ( -\Delta) f \|_{L^2}^2\\
	&= C_1
	\int_{\mathbb R^n} \overline{(-\Delta) f(x)} (-\Delta) f(x)\, dx\\
	&\leq
	\int_{\mathbb R^n} \overline {(-\Delta)^{1/2} f(x)}
	(- \Delta_{A,0})
	(-\Delta)^{1/2}f(x)\, dx\\
	&= \int_{\mathbb R^n} \overline {(-\Delta) f(x)} (- \Delta_{A,0}) f(x)\, dx
	+ \int_{\mathbb R^n} \nabla \overline
	{(-\Delta)^{1/2} f(x)}
	\cdot \mathcal G_A f(x)\, dx\\
	&\leq \| ( -\Delta) f \|_{L^2}
	( \| - \Delta_{A,0} f \|_{L^2} + \| \mathcal G_{A} f \|_{L^2}),
	\end{align*}
where $ \mathcal G_A = [(-\Delta)^{1/2}, A] \nabla$ so that
	\[
	\nabla \cdot \mathcal G_A f
	= [( - \Delta_{A,0}), (-\Delta)^{1/2}] f
	= \nabla [A, (-\Delta)^{1/2}] \nabla f.
	\]
Then, by Lemma \ref{Lemma:4.4}, the Gagliardo-Nirenberg and the Young inequalities,
	\[
	\|\mathcal G_A f\|_{L^2}
	\leq C \| \nabla A \|_{L^\infty} \| \nabla f \|_{L^2}
	\leq \frac{1}{2} \| (-\Delta) f \|_{L^2} + C \|f\|_{L^2},
	\]
which in turn implies
	\[
	\| (-\Delta) f \|_{L^2}
	\leq C \| f \|_{L^2}
	+ C \| - \Delta_{A,0} f \|_{L^2}.
	\]
This inequality and \eqref{eq:A.5} prove the left estimate in \eqref{eq:A.4}.
%]]]
\end{proof}
%]]]
%]]]

%[[[ \section{Estimate of the weight function}
\section{Estimate of the weight function}
%[[[ Our choice of $w$ is
Our choice of $w$ for the proof of the blow-up result
is $w(x) = \langle x \rangle^a$ with $a \in (1/2,1)$.
The lower bound of $a$ is required to guarantee that $1/w \in L^2(\mathbb R)$
for Theorem \ref{Theorem:4}.
The upper bound of $a$ follows from the following Proposition:
%]]]

%[[[ Proposition : weight
\begin{Proposition}
\label{Proposition:7}
For $a < 1$,
	\[
	\langle \cdot \rangle^a \in \dot B_{\infty,1}^1.
	\]
\end{Proposition}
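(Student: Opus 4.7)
The plan is to reduce the assertion to a $\dot{B}^{0}_{\infty,1}$-estimate for the gradient $g := \nabla\langle\cdot\rangle^{a}$. Since $\varphi_{j}$ is supported in a dyadic annulus and vanishes to infinite order at the origin, each Littlewood--Paley projector $P_{j}$ annihilates polynomials; hence for a distribution of polynomial growth like $f = \langle\cdot\rangle^{a}$, the pieces $P_{j}f$ are genuine bounded smooth functions. Standard Bernstein-type multiplier estimates then give the uniform two-sided bound $\|P_{j}f\|_{L^{\infty}} \sim 2^{-j}\|P_{j}\nabla f\|_{L^{\infty}}$, which upon summation in $j \in \mathbb{Z}$ yields the equivalence $\|f\|_{\dot{B}^{1}_{\infty,1}} \sim \|\nabla f\|_{\dot{B}^{0}_{\infty,1}}$.

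A direct computation gives $g(x) = a x \langle x\rangle^{a-2}$, and a straightforward induction produces $|\partial^{\alpha}g(x)| \le C_{\alpha}\langle x\rangle^{a-1-|\alpha|}$ for every multi-index $\alpha$. Because $a < 1$, two features are decisive: all derivatives of $g$ are bounded on $\mathbb{R}^{n}$, and $|g(x)| \lesssim \langle x\rangle^{a-1}$ decays strictly at infinity, so $g \in L^{p}(\mathbb{R}^{n})$ for every $p > n/(1-a)$. I split $\sum_{j \in \mathbb{Z}} \|P_{j}g\|_{L^{\infty}}$ at $j = 0$: for $j \ge 0$, using that the convolution kernel of $P_{j}$ has zero mean together with the mean value theorem and $\|\nabla g\|_{L^{\infty}} < \infty$ yields $\|P_{j}g\|_{L^{\infty}} \lesssim 2^{-j}$; for $j < 0$, fix any $p \in (n/(1-a), \infty)$ and apply Bernstein's inequality combined with the $L^{p}$-boundedness of $P_{j}$ to get $\|P_{j}g\|_{L^{\infty}} \lesssim 2^{jn/p}\|g\|_{L^{p}}$. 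Both $\sum_{j \ge 0}2^{-j}$ and $\sum_{j<0}2^{jn/p}$ are convergent geometric series, so $g \in \dot{B}^{0}_{\infty,1}$ and therefore $\langle\cdot\rangle^{a} \in \dot{B}^{1}_{\infty,1}$.

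The main delicate point is the gradient-reduction step for a polynomially growing distribution; if one prefers to avoid it, the same high/low frequency dichotomy can be carried out directly on $P_{j}\langle\cdot\rangle^{a}$ (using a second-order Taylor expansion for $j \ge 0$ to get $\|P_{j}\langle\cdot\rangle^{a}\|_{L^{\infty}} \lesssim 2^{-2j}$ via the boundedness of $\partial^{2}\langle\cdot\rangle^{a}$, and a rescaling argument for $j < 0$ to get $\|P_{j}\langle\cdot\rangle^{a}\|_{L^{\infty}} \lesssim 2^{-ja}$ by identifying $\langle\cdot\rangle^{a}$ with $|\cdot|^{a}$ at large scales). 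Either way the total sum is bounded by $\sum_{j \ge 0}2^{-j} + \sum_{j < 0}2^{j(1-a)}$, which is finite precisely when $a < 1$; this accounts for the sharpness of the hypothesis.
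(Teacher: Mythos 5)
Your argument is correct and follows essentially the same high/low frequency dichotomy at $j=0$ as the paper's proof: high modes are controlled by boundedness of a second derivative of $\langle\cdot\rangle^a$ (the paper uses $\Delta\langle\cdot\rangle^a$ via the multiplier $2^{-j}P_j(-\Delta)^{1/2}$, you use $\nabla^2\langle\cdot\rangle^a$ via the vanishing moment of the kernel of $P_j$), and low modes are controlled by Bernstein together with the decay $|\nabla\langle x\rangle^a|\lesssim\langle x\rangle^{a-1}\in L^p$ for $p>n/(1-a)$. The only organizational difference is that you pass to $\|\nabla\langle\cdot\rangle^a\|_{\dot B^0_{\infty,1}}$ whereas the paper works directly with $\|P_j(-\Delta)^{1/2}\langle\cdot\rangle^a\|_{L^\infty}$; these are equivalent by frequency localization, so the two proofs are, up to cosmetic rearrangement, the same.
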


\begin{proof}
We recall that $2^{-sj} P_j (-\Delta)^{s/2}$ is a bounded operator on $L^\infty$.
Therefore for $j \geq 0$,
	\[
	\|P_j (-\Delta)^{1/2} \langle x \rangle^a \|_{L^\infty}
	\lesssim 2^{-j} \| 2^j P_j (-\Delta)^{-1/2} \Delta \langle x \rangle^a \|_{L^\infty}
	\lesssim 2^{-j} \|\Delta \langle x \rangle^a \|_{L^\infty}
	\]
which implies $P_{\geq 0} \langle \cdot \rangle^a \in \dot B_{\infty,1}^1$.
Moreover, for $a > 0$ since
	\[
	\| P_j f \|_{L^\infty}
	\lesssim
	2^{jn/p} \| f \|_{L^p}
	\]
and
	\[
	|\nabla \langle x \rangle^a |
	\lesssim \langle x \rangle^{a-1},
	\]
by taking $ p = \frac{2n}{1-a}$
	\begin{align*}
	\|P_j (-\Delta)^{1/2} \langle x \rangle^a \|_{L^\infty}
	&\lesssim
	2^{jn/p} \| \nabla (-\Delta)^{-1/2} \nabla \langle x \rangle^a \|_{L^p}\\
	&\lesssim 2^{j(1-a)/2} \| \langle x \rangle^{-1} \|_{L^{2n}}^{1-a}.
	\end{align*}
Therefore
	\begin{align}
	P_{\leq 0} \langle x \rangle^a  \in \dot B_{\infty,1}^1.
	\label{eq:B.1}
	\end{align}
For $a \leq 0$, it is easy to see \eqref{eq:B.1}.
\end{proof}
%]]]

%[[[ \begin{Remark}
\begin{Remark}
It is worth mentioning that the estimate above is valid in arbitrary dimension,
but we can use only $n=1$ in order to prove Theorem \ref{Theorem:4}.
\end{Remark}
%]]]

%[[[ \begin{Remark}
\begin{Remark}
The upper bound for the function $a$ in Proposition \ref{Proposition:7} is optimal.
Indeed,
	\[
	\langle \cdot \rangle \not\in \dot B_{\infty,1}^1(\mathbb R^n)
	\]
for any $n$.
In order to show this,
we estimate the following equivalent norm for $\dot B_{\infty,1}^1(\mathbb R^n)$:
	\[
	|\mkern-1.5mu|\mkern-1.5mu|f|\mkern-1.5mu|\mkern-1.5mu|%
	_{\dot B_{\infty,1}^1(\mathbb R^n)}
	= \int_0^\infty
	\sup_{|y| < t} \| f(\cdot+y) - 2 f(\cdot) + f(\cdot-y) \|_{L^\infty(\mathbb R^n)}
	\frac{dt}{t^2}.
	\]
For details, see \cite[6.3.1. Theorem]{BL76}.
Then, by substituting $x = 0$, for $t \geq 1$,
	\begin{align*}
	&\sup_{|y| < t} \, \sup_{x \in \mathbb R^n}
	|\langle x+y\rangle - 2 \langle x \rangle + \langle x-y\rangle|\\
	&\geq \sup_{|y| < t}
	2 (\langle y\rangle - 1)\\
	&\geq 2 \bigg(\bigg(1+\frac{t^2}{4} \bigg)^{1/2} - 1\bigg)
	> \frac{t}{8},
	\end{align*}
where we have used the fact that
	\[
	1+ \frac{t^2}{4}
	\geq 1+ \frac{t^2}{8} + \frac{t^2}{256}
	\geq \bigg( 1+ \frac{t}{16}\bigg)^2.
	\]
Therefore,
	\begin{align*}
	|\mkern-1.5mu|\mkern-1.5mu| \langle \cdot \rangle |\mkern-1.5mu|\mkern-1.5mu|%
	_{\dot B_{\infty,1}^1(\mathbb R^n)}
	&\geq \int_{1}^\infty
	\sup_{|y| < t}
	\| \langle \cdot+y \rangle - 2 \langle \cdot \rangle + \langle \cdot-y \rangle \|%
	_{L^\infty(\mathbb R^n)}
	\frac{dt}{t^2}\\
	&\geq \frac{1}{8} \int_{1}^\infty \frac{dt}{t} = \infty.
	\end{align*}
\end{Remark}
%]]]
%]]]
%]]]

\providecommand{\bysame}{\leavevmode\hbox to3em{\hrulefill}\thinspace}
\providecommand{\MR}{\relax\ifhmode\unskip\space\fi MR }
% \MRhref is called by the amsart/book/proc definition of \MR.
\providecommand{\MRhref}[2]{%
  \href{http://www.ams.org/mathscinet-getitem?mr=#1}{#2}
}
\providecommand{\href}[2]{#2}

\end{document}